\definecolor{webgreen}{rgb}{0,.5,0}
\definecolor{webbrown}{rgb}{.6,0,0}
\theoremstyle{plain}
\newtheorem{theorem}{Theorem}
\newtheorem{lemma}[theorem]{Lemma}
\newtheorem{proposition}[theorem]{Proposition}
\theoremstyle{definition}
\newtheorem{definition}{Definition}
\newtheorem{example}{Example}
\theoremstyle{remark}
\begin{document}

\title{EvenQuads Game and Error-Correcting Codes}
\author{Nikhil Byrapuram}
\author{Hwiseo (Irene) Choi}
\author{Adam Ge}
\author{Selena Ge}
\author{Sylvia Zia Lee}
\author{Evin Liang}
\author{Rajarshi Mandal}
\author{Aika Oki}
\author{Daniel Wu}
\author{Michael Yang}
\affil{PRIMES STEP}
\author{Tanya Khovanova}
\affil{MIT}

\maketitle

\begin{abstract}
EvenQuads is a new card game that is a generalization of the SET game, where each card is characterized by three attributes, each taking four possible values. Four cards form a quad when, for each attribute, the values are the same, all different, or half and half. Given $\ell$ cards from the deck of EvenQuads, we can build an error-correcting linear binary code of length $\ell$ and Hamming distance 4. The quads correspond to codewords of weight 4. Error-correcting codes help us calculate the possible number of quads when given up to 8 cards. We also estimate the number of cards that do not contain quads for decks of different sizes. In addition, we discuss properties of error-correcting codes built on semimagic, magic, and strongly magic quad squares.
\end{abstract}

\section{Introduction}

EvenQuads is a new card game introduced by Rose and Perreira \cite{Rose} that is a generalization of the SET game. The new game was initially called SuperSET, but now it is called EvenQuads.

The SET deck consists of 81 cards, where each card is characterized by four attributes: number (1, 2, or 3 symbols), color (green, red, or purple), shading (empty, striped, or solid), and shape (oval, diamond, or squiggle). A set is formed by three cards that are either all the same or all different in each attribute. The players try to find sets among given cards, and the fastest player wins.

Similarly, the EvenQuads deck consists of $64$ cards, where each card is characterized by three attributes: number (1, 2, 3, or 4), color (red, green, yellow, or blue), and shape (square, icosahedron, circle, or spiral). A quad is formed by four cards that are either all the same, all different, or half and half in each attribute. The players try to find quads among given cards, and the fastest player wins.

One can view the EvenQuads deck as a deck of numbers from 0 to 63 inclusive. Four numbers form a quad if their bitwise XOR is 0 \cite{CragerEtAl,LAGames}.

Error-correcting codes allow to encode transmissions in such a way that when an error is introduced, the decoder can still completely recover the transmission. The first error-correcting code was introduced in 1950 by Hamming \cite{Hamming}. Since then, many different types of error-correcting codes have been invented serving different goals. This paper introduces linear binary error-correcting codes based on the cards from an EvenQuads deck.

We start with preliminary information about the EvenQuads game and error-correcting codes in Section~\ref{sec:preliminaries}. We extend the deck to any size that is a power of 2.

In Section~\ref{sec:qecc}, we build error-correcting codes using a set of EvenQuads cards. These are binary linear codes, such that their length is the number of cards, and the weights of the codewords are even. We call such codes quad codes. The number of quads in a given set of cards corresponds to the number of codewords of weight 4.

In Section~\ref{sec:la2help}, we describe the inequality satisfied by the dimension of a quad code. We prove that for any quad code of length $\ell$ and dimension $k$, we can find a set of cards in an EvenQuads-$2^n$ deck that realizes this code if and only if $ n \ge \ell - k - 1$.

In Section~\ref{sec:codesandsymmetries}, we describe symmetries of an EvenQuads deck and show that quad codes correspond to sets of cards up to affine transformations.

In Section~\ref{sec:smallnumbercards}, we calculate possible numbers of quads given up to 7 cards. For a given number of cards and quads, we calculate the smallest deck size in which we can have the given number of cards having exactly the given number of quads.

Section~\ref{sec:weighenumerator} introduces the weight enumerator of a code and uses it to find how many quads are possible given 8 cards. We show that the number of possible quads is one of 0, 1, 2, 3, 5, 6, 7, and 14.

In Section~\ref{sec:noquads}, we consider sets of cards from a given deck that do not contain a quad. We calculate the maximum size of such a set for small decks and provide bounds for larger decks.

Section~\ref{sec:squares} introduces error-correcting codes related to semimagic, magic, and strongly magic quad squares, which were defined in \cite{QuadSquares}. We calculate the smallest dimension for such codes and the weight enumerator corresponding to this smallest dimension. We also enumerate such codes in each possible dimension.

\section{Preliminaries}
\label{sec:preliminaries}

\subsection{EvenQuads}

An EvenQuads deck consists of $64 = 4^3$ cards with different objects. The cards have 3 attributes with 4 values each:
\begin{itemize}
\item Number: 1, 2, 3, or 4 symbols.
\item Color: red, green, yellow, or blue.
\item Shape: square, icosahedron, circle, or spiral.
\end{itemize}

A \textit{quad} consists of four cards so that for each attribute, the cards must be either all the same, all different, or half and half. 

In EvenQuads, the dealer lays out 12 cards from the deck facing up. Then, without touching the cards, players look for four cards that form a quad. As soon as the player finds a quad, they have to yell out ``quad'' and whoever says it first gets to take the four cards, as long as they are correct. If the four cards they chose do not form a quad, they have to put the cards back. If the cards do form a quad, then four new cards are drawn from the deck. If players cannot find a quad after a while, the dealer may lay out an additional card. The game ends when no more cards are left in the deck, and there are no possible quads among the remaining cards. The winner of the game is the one with the most quads.

We can assume, as suggested in \cite{CragerEtAl}, that each attribute takes values in the set $\{0,1,2,3\}$. Then, four cards form a quad if and only if the bitwise XOR (also called parity sum) of the values in each attribute is zero. This is equivalent to saying that each attribute takes values in $\mathbb{Z}_2^2$ and four cards form a quad if and only if for each attribute the sum of the vector values in $\mathbb{Z}_2^2$ is the zero vector. Thus, we can view our cards as vectors in $\mathbb{Z}_2^6$. For generalizations, we can consider an EvenQuads deck of size $2^n$. It corresponds to vectors in $\mathbb{Z}_2^n$. It follows that four vectors $\vec{a}$, $\vec{b}$, $\vec{c}$, and $\vec{d}$ form a quad if and only if
\[\vec{a} + \vec{b} + \vec{c} + \vec{d} = \vec{0}.\]

Consider four vectors $\vec{a}$, $\vec{b}$, $\vec{c}$, and $\vec{d}$ forming a quad. By a translation, we can assume that $\vec{a}$ is the origin. Then, from the above equation we get $\vec{d} = - \vec{b} - \vec{c} = \vec{b} + \vec{c}$. This means that vector $\vec{d}$ is in the same plane as the origin and vectors $\vec{b}$ and $\vec{c}$. Thus, four cards form a quad if and only if their endpoints belong to the same plane.

In the rest of the paper, we often use numbers from 0 to $2^n-1$ inclusive to label the cards in the EvenQuads-$2^n$ deck. Sometimes, to better visualize quads, we represent them as quaternary strings: we convert every number to base 4 and pad it with zeros on the left to reach a needed length of $\lceil \frac{n}{2} \rceil$. When $n$ is odd, the first digit in a string representing a card can only be 0 or 1. For example, a quad in the standard deck can be represented as four numbers 0, 21, 42, and 63. The cards in quaternary notation are 000, 111, 222, and 333.

Every quad is defined by three distinct cards. The fourth card in this quad can be uniquely found by bitwise XORing the three cards. In other words, the fourth card is the fourth point in the affine plane, defined by the three initial cards.

In particular, the EvenQuad-$2^n$ deck contains $\frac{\binom {2^n}{3}}{4}$ quads. Indeed, we can choose any three cards in $\binom {2^n}{3}$ ways, and these three cards always complete to a quad. Each quad is counted 4 times, so we have to divide by 4. The corresponding sequence is A016290:
\[1, 14, 140, 1240, 10416, 85344, 690880, 5559680, \ldots.\]

Given four cards from an EvenQuads-$2^n$ deck, the probability that they form a quad is $\frac{1}{2^n-3}$. This follows from the fact that any three cards from this deck can be uniquely extended into a quad. Thus, given $m$ cards from a Quads-$2^n$ deck, the expected number of quads is $\frac{\binom{m}{4}}{2^n-3}$.

\subsection{Error-correcting codes}

Consider a binary string $\boldsymbol{b}$ of length $\ell$. We can view it as a vector in $\mathbb{Z}_2^\ell$. The \textit{Hamming distance} between two strings of equal length is the number of positions at which the corresponding symbols are different. 

Suppose we choose a set of binary strings $C$ from $\mathbb{Z}_2^\ell$ such that they form a linear subspace and the Hamming distance between any two of them is at least 3. The strings in $C$ are called \textit{codewords}. Such a set forms an error-correcting code of \textit{length} $\ell$.

Suppose a sender sends a message $\boldsymbol{b}$ that is one of the binary strings in $C$. We allow for noisy transmission, where not more than 1 bit can be corrupted during the transmission. The receiver receives a binary string $\boldsymbol{b_1}$, where the Hamming distance between $\boldsymbol{b}$ and $\boldsymbol{b_1}$ is not more than 1. In addition, the Hamming distance between $\boldsymbol{b_1}$ and any other codeword in $C$ is at least 2. This way, the receiver can uniquely correct the corruption and figure out that the sent message is $\boldsymbol{b}$.

We assume that our codes are \textit{linear}, meaning that the set of codewords is closed under addition. The fact that the codewords form a subspace means that the number of codewords is a power of 2. We can define a \textit{weight} of the codeword to be the number of ones in its representation. Linearity of the code means that the smallest Hamming distance between two codewords equals the smallest weight of a nonzero codeword.

\begin{example}
Consider $C = \{000,111\}$. This is a code of length 3. Any received message corrupted by not more than one bit can be recovered by a majority vote.
\end{example}

Linear binary codes with a minimum distance of 4 are used a lot. Like Hamming codes with a minimum distance of 3, they can correct single-bit errors. But their advantage is that they also detect a double error. They cannot correct the double error though. Such codes are often known as SECDED (abbreviated from single error correction double error detection).

\section{Quads and error-correcting codes}
\label{sec:qecc}

We can analyze the number of quads in sets of cards by converting a set of cards into an error-correcting code. Suppose we are given cards $\vec{a_1}$ through $\vec{a_\ell}$ from an EvenQuads-$2^n$ deck. We construct a linear binary code $C$ of length $\ell$. We say $\boldsymbol{c} = c_1 c_2 \cdots c_\ell$ is a codeword if and only if:

\begin{itemize}
\item An even number of $c_1$ through $c_\ell$ are 1.
\item $c_1\vec{a_1}+\cdots+c_\ell\vec{a_\ell} = 0$.
\end{itemize}

By our definition, every codeword has an even weight.

As we require the codewords to have an even number of ones, the Hamming distance between any two codewords has to be even. For our codewords, the minimum distance is 4. Suppose, to the contrary, there are two codewords with a distance of 2. Then their sum is a codeword of weight 2. But weight 2 is impossible, as this would mean that we have two identical cards among our cards.

\begin{example}
\label{ex:ecc4}
Suppose we have four cards $\vec{a_1}$ through $\vec{a_4}$, then the length of the code $\ell$ is 4. If the cards form a quad, the only codewords are 0000 and 1111. The weight of the first codeword is 0, and the weight of the second codeword is 4.
\end{example}

It is important to note that $c_i$s are scalars and $\vec{a_i}$s are vectors. The cards are in $\mathbb{Z}_2^n$, while the codeword $\boldsymbol{b}$ can also be viewed as a vector in the space $\mathbb{Z}_2^\ell$. We use the vector symbol for the vectors corresponding to cards and bold symbols for the vectors corresponding to codewords to emphasize that they belong to different spaces.

Preliminary observations:
\begin{itemize}
\item The word with all zeros is always a codeword.
\item If we have a quad among the cards, then the corresponding word with four ones is a codeword.
\item We cannot have a codeword with two ones.
\end{itemize}

\begin{definition}
When we get code $C$ from a sequence of cards $\vec{a_1},\ldots,\vec{a_\ell}$, we say that the sequence of cards \textit{realizes} code $C$. When the cards belong to the EvenQuads-$2^n$ deck, we say that the code $C$ \textit{can be realizable} in the deck of size $2^n$.
\end{definition}

\section{Linear Algebra to Help}
\label{sec:la2help}

\subsection{From cards to codes}

We can represent the cards as an $n$-by-$\ell$ matrix $A$, consisting of zeros and ones. Each column is a card, and the $i$-th row is a set of values of the $i$th coordinate in each card. Then, the codewords are the vectors belonging to the nullspace of the matrix $A$. If we denote transpose by $T$, then the codeword $\boldsymbol{c}$ satisfies the equation
\[A \boldsymbol{c}^T = 0.\]
The condition that a codeword $\boldsymbol{c}$ has to have an even number of ones is equivalent to $\boldsymbol{c}$ being orthogonal to the vector consisting of all ones in $\mathbb{Z}_2^\ell$. Let us denote by $\boldsymbol{1}$ the vector consisting of all 1s and by $\boldsymbol{0}$ the vector consisting of all 0s in $\mathbb{Z}_2^\ell$.

For every vector $\vec{a_i}$, where $1 \leq i \leq \ell$, let us consider the augmented vector $\vec{a_i}'$, where we add one more coordinate with the value 1.

Let us consider matrix $A'$, where we add a row of all ones to the bottom of matrix $A$. In other words, the columns of $A'$ are formed by augmented vectors $\vec{a_i}'$. Then the code $C$ is exactly the nullspace of $A'$. This is because the vectors orthogonal to $\boldsymbol{1}$ are exactly the vectors that contain an even number of ones. In other words, the space of codewords $C$ is the dual space to the space of rows of $A'$. Sometimes, we denote our code $C$ as $C_A$ to emphasize that it is built by the set of cards corresponding to the matrix $A$.

Let $\boldsymbol{u}$ and $\boldsymbol{v}$ be two vectors in $\mathbb{Z}_2^\ell$. The dot product $\boldsymbol{u} \cdot \boldsymbol{v}$ is the parity of the number of places where $\boldsymbol{u}$ and $\boldsymbol{v}$ are both 1. Let $C$ be a code. Then the \textit{dual code} $C^*$ is the set of vectors $\boldsymbol{u}$ such that $\boldsymbol{u} \cdot \boldsymbol{v} =0$ for all vectors $\boldsymbol{v}$ of $C$. From standard linear algebra \cite{Axler}, we get the following proposition.

\begin{proposition}
Let $C$ be a linear binary code of length $\ell$ that contains $2^k$ codewords. Then $C^*$ is a linear binary code with $2^{\ell-k}$ codewords. Furthermore, $C^{**}=C$.
\end{proposition}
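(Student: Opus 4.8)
The plan is to prove this standard fact about dual codes by working with bases and a dimension-counting argument, then establishing the involution $C^{**}=C$ via a clean inclusion-plus-dimension argument. First I would fix a basis $\boldsymbol{v}_1, \ldots, \boldsymbol{v}_k$ of $C$ and form the $k \times \ell$ matrix $G$ whose rows are these basis vectors (a generator matrix for $C$). By definition, $\boldsymbol{u} \in C^*$ if and only if $\boldsymbol{u} \cdot \boldsymbol{v} = 0$ for every $\boldsymbol{v} \in C$, and since the dot product is bilinear over $\mathbb{Z}_2$, this is equivalent to $\boldsymbol{u} \cdot \boldsymbol{v}_i = 0$ for each $i$, i.e. $G \boldsymbol{u}^T = \boldsymbol{0}$. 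Thus $C^*$ is precisely the nullspace of $G$, which is a linear subspace of $\mathbb{Z}_2^\ell$, so $C^*$ is a linear binary code of length $\ell$.

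Next I would pin down its dimension. Since the rows of $G$ are linearly independent, $G$ has rank $k$, so by the rank–nullity theorem over the field $\mathbb{Z}_2$, the nullspace of $G$ has dimension $\ell - k$. Hence $C^*$ contains $2^{\ell-k}$ codewords, giving the count in the statement. One subtlety worth a sentence: rank–nullity and the notion of dimension work over $\mathbb{Z}_2$ exactly as over any field, so nothing special is needed here beyond citing standard linear algebra.

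For the identity $C^{**} = C$, I would first observe the easy inclusion $C \subseteq C^{**}$: if $\boldsymbol{v} \in C$, then for every $\boldsymbol{u} \in C^*$ we have $\boldsymbol{u} \cdot \boldsymbol{v} = 0$ by the definition of $C^*$, which says exactly that $\boldsymbol{v}$ is orthogonal to all of $C^*$, i.e. $\boldsymbol{v} \in (C^*)^* = C^{**}$. Then I would apply the dimension formula twice: $C^*$ has $2^{\ell-k}$ codewords, so $C^{**} = (C^*)^*$ has $2^{\ell - (\ell-k)} = 2^k$ codewords, the same number as $C$. A subspace contained in another subspace of the same (finite) dimension must equal it, so $C = C^{**}$.

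The only place requiring care — and the step I would flag as the main obstacle, though it is a mild one — is ensuring the non-degeneracy of the bilinear form on $\mathbb{Z}_2^\ell$ so that $\dim C^* = \ell - \dim C$ genuinely holds; over $\mathbb{Z}_2$ the standard dot product is non-degenerate (its Gram matrix is the identity), so the orthogonal complement behaves dimensionally just as in the real case, even though a subspace need not be complementary to its own orthogonal complement. Once that is in hand, the whole argument is routine bookkeeping with bases, rank–nullity, and the inclusion argument above, and I would present it compactly by citing \cite{Axler} for the underlying linear algebra.
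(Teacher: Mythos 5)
Your proof is correct and complete. The paper offers no argument of its own here --- it simply cites standard linear algebra (Axler) --- and your writeup is exactly the standard argument being invoked: realize $C^*$ as the nullspace of a generator matrix, apply rank--nullity over $\mathbb{Z}_2$, and get $C^{**}=C$ from the inclusion $C\subseteq C^{**}$ plus equality of dimensions. Your closing remark is also well taken: over $\mathbb{Z}_2$ a code can intersect its dual nontrivially (indeed the quad codes in this paper contain $\boldsymbol{1}$, which is self-orthogonal when $\ell$ is even), yet the dimension count $\dim C^* = \ell - k$ is unaffected since it comes from rank--nullity rather than from any direct-sum decomposition.
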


It follows that given the code $C_A$, its dual code is a span of rows of $A'$.

Note that a linear binary code has the Hamming distance of at least 4 between codewords if and only if the minimum number of ones in a nonzero codeword is at least 4. Let us call a linear binary code a \textit{quad} code if each codeword has an even number of ones, and the minimum number of ones in a nonzero codeword is at least 4. In particular, code $C_A$ described above is a quad code.

\subsection{From codes to cards}

Suppose we have a quad code $C$ of length $\ell$ and dimension $k$. We want to construct a set of cards corresponding to it. We start with the known connection between the number of codewords and code length \cite{Hamming}.

\begin{lemma}
\label{lemma:cwbound}
Let $C$ be a binary code of length $\ell$ with a minimal Hamming distance of at least 4. Then $C$ contains at most $\frac{2^{\ell-1}}{\ell}$ codewords.
\end{lemma}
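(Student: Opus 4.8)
The plan is to reduce the statement to the classical sphere-packing (Hamming) bound for codes of minimum distance at least $3$, which is presumably what the citation \cite{Hamming} in the lemma refers to. Recall that bound in the form we need: if $C'$ is a binary code of length $m$ with minimum distance at least $3$, then the closed Hamming balls of radius $1$ centered at distinct codewords of $C'$ are pairwise disjoint (a shared word would be within distance $1$ of two codewords, forcing their distance to be at most $2$), and each such ball has $1 + \binom{m}{1} = m+1$ elements, so $|C'|\,(m+1) \le 2^m$. The idea is to produce from $C$ a code of length $\ell - 1$ and minimum distance at least $3$ without losing any codewords; then $m + 1 = \ell$ gives exactly the claimed bound $|C| \le 2^{\ell-1}/\ell$.

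First I would puncture $C$ at its last coordinate: let $C'$ be the set of length-$(\ell-1)$ strings obtained by deleting the last bit from each codeword of $C$. Two things must be checked. First, no two codewords of $C$ collapse to the same punctured word: if they did, the two codewords would agree in every coordinate except possibly the last, i.e.\ be at Hamming distance $0$ or $1$, which is impossible since $d(C) \ge 4$; hence $|C'| = |C|$. Second, deleting a single coordinate lowers the Hamming distance of any fixed pair of strings by at most $1$, so any two distinct codewords of $C'$ remain at distance at least $4 - 1 = 3$, that is, $d(C') \ge 3$. This puncturing bookkeeping is the only real subtlety in the argument, and it is precisely the hypothesis $d(C) \ge 4$ that makes it go through: distance at least $2$ prevents the collapse, and the extra slack down to $4$ is what guarantees the punctured code still has distance at least $3$.

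Finally I would apply the sphere-packing bound to $C'$, which has length $m = \ell - 1$: the radius-$1$ balls around its $|C'|$ codewords are disjoint subsets of $\mathbb{Z}_2^{\ell-1}$, each of size $1 + (\ell - 1) = \ell$, so $\ell \cdot |C'| \le 2^{\ell-1}$, and therefore $|C| = |C'| \le 2^{\ell-1}/\ell$. The small cases are covered as well: if $\ell \le 3$ then any two distinct binary strings of length $\ell$ lie within distance $3 < 4$, so $|C| \le 1$, while $2^{\ell-1}/\ell \ge 1$ for $\ell \le 3$, so the inequality holds trivially there; and for $\ell \ge 4$ the argument above applies verbatim. (One could also sidestep the puncturing step and run the same counting directly, replacing each codeword $\boldsymbol{c}$ of $C$ by the $\ell$ words consisting of $\boldsymbol{c}$ with its last bit deleted together with the $\ell-1$ further single-bit flips, but this is just the unwound version of the same proof.)
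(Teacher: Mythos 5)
Your proof is correct. The paper does not actually prove this lemma---it states it as a known fact and cites Hamming---so there is no in-paper argument to compare against; your puncture-and-sphere-pack derivation is exactly the standard proof that the citation points to (the identity $A(\ell,4)=A(\ell-1,3)$ combined with the Hamming bound at radius $1$). The two points that need care---that puncturing cannot merge codewords because $d(C)\ge 4>1$, and that it lowers pairwise distances by at most one so $d(C')\ge 3$---are both handled explicitly, and the degenerate cases $\ell\le 3$ are checked, so nothing is missing.
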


In other words,
\[k \leq \ell - 1 - \lceil \log_2 \ell \rceil.\]

We are ready to prove our theorem that describes when we can find cards that realize the given code.

\begin{theorem}
\label{thm:fromCodes2Cards}
Let $C$ be a quad code of length $\ell$ containing $2^k$ codewords, where $\ell> 0$. Then, the code is realizable in an EvenQuads-$2^n$ deck if and only if
\[n \geq \ell-k-1.\]
\end{theorem}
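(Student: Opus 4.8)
The plan is to prove both directions of the biconditional, starting with the easier necessity direction and then constructing cards explicitly for sufficiency. For necessity, suppose a set of $\ell$ cards in an EvenQuads-$2^n$ deck realizes $C$. These cards correspond to the columns of an $n$-by-$\ell$ matrix $A$, and as explained in the excerpt, $C$ is the nullspace of the augmented matrix $A'$, which has $n+1$ rows. Since $\dim C = k$, the rank-nullity theorem gives $\operatorname{rank}(A') = \ell - k$. But $A'$ has only $n+1$ rows, so $\operatorname{rank}(A') \le n+1$, giving $\ell - k \le n+1$, i.e., $n \ge \ell - k - 1$. I should double-check that $A'$ genuinely has $n+1$ rows even in degenerate cases (e.g., when $n$ is large but the cards span a small subspace — that is fine, rank can be less than the number of rows).

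For sufficiency, assume $n \ge \ell - k - 1$; I want to build $\ell$ cards in $\mathbb{Z}_2^n$ realizing $C$. Since $C$ is a quad code of dimension $k$, the dual code $C^*$ has dimension $\ell - k$ by the Proposition, and $C^*$ contains $\boldsymbol{1}$ (because every codeword of $C$ has even weight, hence is orthogonal to $\boldsymbol{1}$). Choose a basis of $C^*$ that includes $\boldsymbol{1}$; write these $\ell - k$ basis vectors as the rows of an $(\ell-k)$-by-$\ell$ matrix $B$ whose last row is $\boldsymbol{1}$. Then the nullspace of $B$ is exactly $C^{**} = C$. Now I would let $A'$ be the matrix whose first $\ell - k - 1$ rows are the non-$\boldsymbol{1}$ basis vectors and whose last row is $\boldsymbol{1}$; this has $\ell - k \le n + 1$ rows. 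If $n + 1 > \ell - k$, pad with zero rows inserted above the all-ones row so that $A'$ becomes $(n+1)$-by-$\ell$ with an all-ones bottom row — padding with zero rows changes neither the nullspace nor the rowspan-augmentation structure. The matrix $A$ (the top $n$ rows) then has columns in $\mathbb{Z}_2^n$; declare these columns to be the $\ell$ cards. By construction the code realized by these cards is the nullspace of $A'$, which is $C$.

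The one genuine obstacle is verifying that the constructed cards form a legitimate set — in particular that the columns of $A$ are distinct, since repeated cards are disallowed and would force a weight-2 codeword. Two columns of $A$ coincide iff the corresponding columns of $A'$ differ only possibly in the all-ones row, but since that row is constant, distinctness of columns of $A$ is equivalent to distinctness of columns of $A'$, which is equivalent to $C$ containing no weight-$\le 2$ nonzero codeword other than what the column structure allows — and precisely this is guaranteed because $C$ is a quad code with minimum weight at least $4$. Concretely: if columns $i \ne j$ of $A'$ are equal, then $\boldsymbol{e}_i + \boldsymbol{e}_j$ is a weight-$2$ codeword of $C$, contradicting the quad-code hypothesis. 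So distinctness is automatic. I would also note the edge case $\ell - k - 1 = 0$ (forcing $n \ge 0$), handled by the same construction with no non-$\boldsymbol{1}$ basis rows, and should remark that Lemma~\ref{lemma:cwbound} guarantees $\ell - k - 1 \ge \lceil \log_2 \ell \rceil \ge 1$ for $\ell \ge 2$, so the bound is never vacuous in the interesting range, while $\ell = 1$ forces $C = \{\boldsymbol{0}\}$, $k = 0$, and $n \ge 0$, realized by the single card $\vec 0$.
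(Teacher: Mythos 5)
Your proposal is correct and follows essentially the same route as the paper's proof: necessity by comparing the number of rows of the augmented matrix $A'$ to the rank $\ell-k$ forced by the nullspace, and sufficiency by building $A'$ from a spanning set of $C^*$ containing $\boldsymbol{1}$ and ruling out repeated columns via the absence of weight-2 codewords. The only cosmetic difference is that you pad with zero rows where the paper pads with repeated spanning vectors; both preserve the row span and hence the nullspace.
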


\begin{proof}
We start with the dual space $C^*$. This space contains vector $\boldsymbol{1}$ and has dimension $\ell-k$. We pick $\ell - k - 1$ independent vectors that together with $\boldsymbol{1}$ span $C^*$.

Suppose $n < \ell - k - 1$, then the matrix $A'$ for any set of cards has fewer than $\ell -k$ rows. Thus, its nullspace has a dimension greater than $k$, implying that $C$ cannot equal its nullspace.
 
For the other direction, suppose $n \geq \ell-k-1$, then in addition $n \ge \lceil \log_2 \ell \rceil$ by Lemma~\ref{lemma:cwbound}. That means we can find $n$ vectors that span $C^*$ by adding more vectors, which can be repeat vectors, to the ones we found. We arrange these $n$ vectors as rows of matrix $A$. The columns of this matrix are our cards. It is left to show that the columns are different. It follows from the fact that $C$ is the nullspace of matrix $A'$. If the set of cards had identical cards, then $C$ would have contained a codeword of weight 2. As, by our assumption, this is not the case, all the cards must be different.
\end{proof}

\section{Codes and Symmetries}
\label{sec:codesandsymmetries}

There is a natural approach to symmetries of quads using linear algebra. So far we looked at cards as elements of a vector space $\mathbb{Z}_2^n$. A more conceptual way to do this is to see them as elements of the corresponding affine space (which is the same as a vector space, but without choosing the origin). This view emphasizes that the cards are equivalent to each other. The symmetries of the space are affine transformations: they consist of parallel translations and invertible linear transformations.

We can view an affine transformation as a pair $(M,\vec{t})$, where $M$ is an invertible $n$-by-$n$ matrix over the field $\mathbb{F}_2$ and $\vec{t}$ is a translation vector in $\mathbb{Z}_2^n$. The pair acts on vector $\vec{a}$ as $\vec{y} = M\vec{a} + \vec{t}$. We can simplify this description using augmented vectors. Indeed, $\vec{y} = M\vec{a} + \vec{t}$ is equivalent to the following
\[\begin{bmatrix} \vec{y}^T \\1\end{bmatrix} =
\left[\begin{array}{ccc|c}&M&& \vec{t}^T \\0&\cdots &0&1\end{array}\right]\begin{bmatrix} \vec{a}^T \\1 \end{bmatrix}.\]
Suppose $\vec{a'}$, $\vec{t'}$, and $\vec{y'}$ are augmented vectors for $\vec{a}$, $\vec{t}$, and $\vec{y}$. Then, we can express the affine transformation as
\[\vec{y'}^T = P \vec{a'}^T,\]
where $P$ is an invertible $(n+1)$-by-$(n+1)$ matrix above that we got from $M$ by adding a zero row at the bottom and then attaching $\vec{t'}^T$ as the last column.

Affine transformations preserve quads. Indeed if $\vec{a_i}$, for $1 \leq i \leq 4$ form a quad then the transformation produces four vectors $\vec{b_i} = M\vec{a_i} + \vec{t}$. We have
\[\sum_{i=1}^4\vec{b_i} = \sum_{i=1}^4(M\vec{a_i} + \vec{t}) = \sum_{i=1}^4 M\vec{a_i} + \sum_{i=1}^4\vec{t} = M\sum_{i=1}^4\vec{a_i} + 4\vec{t} = 0,\]
proving that vectors $\vec{b_i}$ form a quad too.

\begin{theorem}
\label{thm:equivalence}
If we change a set of cards according to an affine transformation, the corresponding code does not change. Moreover, if two sets of cards correspond to the same code, then one of the sets can be achieved from the other by an affine transformation.
\end{theorem}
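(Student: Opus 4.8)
The plan is to reduce everything to statements about the augmented matrices $A'$ and $B'$ whose columns are the augmented card vectors, using two facts already in hand: the code realized by a card sequence is exactly the nullspace of the corresponding augmented matrix, and an affine transformation $(M,\vec{t})$ acts on augmented card vectors by left multiplication by the invertible $(n+1)\times(n+1)$ matrix $P$ exhibited above. The first assertion of the theorem is then immediate: replacing the cards by their images under $(M,\vec{t})$ replaces $A'$ by $PA'$, and left multiplication by an invertible matrix does not change the nullspace, so the new code $\mathrm{null}(PA')$ equals the old code $\mathrm{null}(A')$. This step is routine.

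For the converse, suppose the card sequences $\vec{a_1},\dots,\vec{a_\ell}$ and $\vec{b_1},\dots,\vec{b_\ell}$ realize the same code $C$ (the case $\ell=0$ being vacuous). Appending zero coordinates to the cards of the smaller deck changes neither code, so I may assume both sequences lie in a common $\mathbb{Z}_2^n$; thus $\mathrm{null}(A')=\mathrm{null}(B')=C$. The guiding idea is that this common nullspace is precisely the space of linear dependences among the augmented card vectors, and a finite list of vectors is determined up to linear isomorphism by its dependences. Concretely, let $V_A$ and $V_B$ be the column spans of $A'$ and $B'$, and define $\psi\colon V_A\to V_B$ by $\psi\bigl(\sum_i d_i\vec{a_i}'\bigr)=\sum_i d_i\vec{b_i}'$. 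The equality $\mathrm{null}(A')=\mathrm{null}(B')$ shows $\psi$ is well defined and injective, it is obviously onto, and it satisfies $\psi(\vec{a_i}')=\vec{b_i}'$ for all $i$; hence $\psi$ is an isomorphism carrying the first configuration of augmented vectors to the second.

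The main obstacle is then to upgrade this abstract isomorphism to an honest affine transformation — that is, to realize $\psi$ as the restriction of a linear automorphism of $\mathbb{Z}_2^{n+1}$ of the special block shape of the matrix $P$ above, equivalently a linear automorphism fixing the last coordinate. The key point is that $\psi$ respects the last coordinate automatically: since every $\vec{a_i}'$ and every $\vec{b_i}'$ has last coordinate $1$, the functional $\epsilon$ reading off the last coordinate satisfies $\epsilon\circ\psi=\epsilon$ on the spanning set $\{\vec{a_i}'\}$, hence on all of $V_A$. Therefore $\psi$ restricts to an isomorphism between $V_A\cap H$ and $V_B\cap H$, where $H=\ker\epsilon$ is the hyperplane of vectors with last coordinate $0$. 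I extend this restriction to a linear automorphism $M$ of $H$ (an isomorphism between subspaces always extends to the ambient space), and then extend $M$ to a linear map $P$ on $\mathbb{Z}_2^{n+1}=H\oplus\langle\vec{a_1}'\rangle$ by additionally sending $\vec{a_1}'$ to $\vec{b_1}'$. Then $P$ maps a basis to a basis, so it is invertible; it fixes the last coordinate (check on the basis), so it has the block form of the matrix $P$ above and comes from an affine transformation $(M,\vec{t})$ with $M$ invertible; and it agrees with $\psi$ on $V_A$, so $P\vec{a_i}'=\vec{b_i}'$ for every $i$. Stripping off the augmenting coordinate, this reads $M\vec{a_i}+\vec{t}=\vec{b_i}$ for every $i$, which exhibits the desired affine transformation and finishes the argument.
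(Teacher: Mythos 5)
Your proof is correct. The first half coincides with the paper's argument verbatim in substance: $B'=PA'$ with $P$ invertible, so the nullspace (hence the code) is unchanged. For the converse, however, you take a genuinely different route. The paper argues on \emph{row} spaces: since $\mathrm{null}(A')=\mathrm{null}(B')$, duality gives that the rows of $A'$ and $B'$ span the same subspace, so $A'=PB'$ for some $P$, which one then adjusts to be invertible with last row $(0,\dots,0,1)$. You instead work with \emph{column} spans, observing that the common nullspace is exactly the space of linear dependences among the augmented card vectors, so the assignment $\vec{a_i}'\mapsto\vec{b_i}'$ extends to a well-defined isomorphism $\psi\colon V_A\to V_B$; you then extend $\psi$ to an automorphism of $\mathbb{Z}_2^{n+1}$ preserving the last-coordinate functional via the splitting $\mathbb{Z}_2^{n+1}=H\oplus\langle\vec{a_1}'\rangle$. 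Your version has the advantage of making explicit two points the paper glosses over: why an \emph{invertible} $P$ exists at all when the rows of $B'$ may be linearly dependent (the paper's ``we can represent the rows of $A'$ as a linear combination of rows of $B'$'' only directly yields some $P$, not an invertible one), and why $P$ can be arranged to fix the last coordinate, which is what makes it an affine transformation with invertible linear part. The paper's row-space phrasing is shorter and dovetails with its earlier identification of the dual code with the row span of $A'$, but your extension argument is the more complete justification. One cosmetic caution: you reuse the letter $M$ both for the automorphism of the hyperplane $H\subset\mathbb{Z}_2^{n+1}$ and for the $n\times n$ linear part of the affine map; these do agree under the identification $H\cong\mathbb{Z}_2^n$, but it is worth saying so.
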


\begin{proof}
Consider $\ell$ cards $\vec{a_i}$, for $1 \leq i \leq \ell$, and the corresponding matrix $A'$ and code $C_A$, where $C_A$ is the nullspace of $A'$. Applying an affine transformation $(M,\vec{t})$ to vectors $\vec{a_i}$, we get vectors $\vec{b_i} = M\vec{a_i} + \vec{t}$. We denote the new augmented matrix as $B'$. We know that $B' = PA'$. Thus, the nullspaces of $A'$ and $B'$ are the same. It follows that the new code $C_B$ is the code $C_A$ before the transformation.

For the second part, consider two sets of cards corresponding to the same code $C$. That means the augmented matrices $A'$ and $B'$ for both sets have the same nullspace $C$. It follows that the rows of $A'$ and $B'$ span the same space. That means we can represent the rows of $A'$ as a linear combination of rows of $B'$. Thus, there exists a matrix $M$, such that $A' = PB'$. Moreover, the last rows of $A'$ and $B'$ are the same. Thus, we can find $P$, such that all the elements of the last row are zero, except for the last element, which is 1. Therefore, $P$ represents the affine transformation in our space.
\end{proof}

It follows that if two sets of cards correspond to the same code, the number of quads in each set is the same.

\section{Possible number of quads for a small number of cards}
\label{sec:smallnumbercards}

We are interested in the number of possible quads given $\ell$ cards. Suppose that there exists a set of cards that produces $q$ quads with $\ell$ cards from an EvenQuads-$2^n$ deck. By adding more cards to the deck, we can conclude that any larger deck can also have $\ell$ cards with $q$ quads. Thus, it is enough to find the smallest such deck. Denote by $D(\ell,q)$ the smallest deck size $2^n$ such that there exist $\ell$ cards from the EvenQuads-$2^n$ deck that form exactly $q$ quads. If such a deck does not exist, we define $D(\ell,q)$ as $\infty$.

Number $A(\ell,4)$ describes the maximum number of codewords in a binary linear code of length $\ell$ with the Hamming distance 4. This number is described by sequence A005864 in the OEIS \cite{OEIS}, where the sequence starts with index 1:
\[1,\ 1,\ 1,\ 2,\ 2,\ 4,\ 8,\ 16,\ 20,\ 40,\ 72,\ 144,\ \ldots.\]
As each quad gives us a codeword, and there is also an all-zero codeword, the maximum number of quads among $\ell$ cards is bounded by A005864$(\ell) - 1$. Thus, for $q$ greater than A005864$(\ell) - 1$, we have $D(\ell,q) = \infty$.

We start by considering small examples without using the machinery of error-correcting codes. We cannot have a quad if we have fewer than 4 cards. The next two examples cover cases for 4 and 5 cards.

\begin{example}
\label{ex:ecc4}
Suppose we have 4 cards. If the deck is of size 4, then we have 1 quad. If it is a bigger deck, we can have 0 quads by choosing any three cards, and then for the fourth card, we can pick any other card that avoids the quad. As A005864(4) = 2, the maximum number of quads is not more than 1, thus we have covered all cases.
\end{example}

\begin{example}
\label{ex:ecc5}
Suppose we have 5 cards. It follows that the deck size is at least 8. As A005864(5) = 2, the maximum number of quads is not more than 1. We can prove directly that more than 1 quad is not possible. Indeed, if there are two quads, their intersection has 3 cards. But 3 cards define a quad uniquely. We can pick any quad and an extra card for an example of 1 quad among 5 cards. For example, 0, 1, 2, 3, and 4. If the deck size is 8, then 5 cards always contain a quad: we leave it for the reader to check this. For a larger deck, we can use cards 0, 1, 2, 4, and 8 for an example of 5 cards without a quad.
\end{example}

Table~\ref{table:possibleNumQuads} shows $D(\ell,q)$ for $\ell < 8$. The first column describes the number of cards, and the first row describes the number of quads. If the number of cards is $\ell$ and the number of quads is $q$, the entry corresponding to $\ell$ and $q$ shows $D(\ell,q)$. We replaced the infinity sign with an empty entry so as not to clutter the table.

\begin{proposition}
\label{prop:upto7cards}
Table~\ref{table:possibleNumQuads} shows $D(\ell,q)$ for $\ell < 8$.
\end{proposition}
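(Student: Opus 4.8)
The plan is to fill in Table~\ref{table:possibleNumQuads} entry by entry, for each number of cards $\ell$ from $4$ to $7$ and each feasible number of quads $q$, by combining three ingredients: the upper bound $q \le A005864(\ell) - 1$ already recorded before the proposition (which rules out all but finitely many $q$), the realizability criterion of Theorem~\ref{thm:fromCodes2Cards} (which converts ``find cards in an EvenQuads-$2^n$ deck with exactly $q$ quads'' into ``find a quad code of length $\ell$ and appropriate dimension with exactly $q$ weight-4 codewords''), and the affine-equivalence statement of Theorem~\ref{thm:equivalence} (so that we only need to classify codes, not sets of cards). For the cases $\ell = 4, 5$ the work is essentially already done in Examples~\ref{ex:ecc4} and \ref{ex:ecc5}; I would just restate their conclusions in the $D(\ell,q)$ notation. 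So the substance of the proof is the cases $\ell = 6$ and $\ell = 7$.

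For each of $\ell = 6, 7$, I would enumerate the possible quad codes $C$ of length $\ell$ — equivalently, by duality, enumerate the possible spans of rows of the augmented matrix $A'$, a subspace of $\mathbb{Z}_2^\ell$ of dimension $\ell - k$ containing $\boldsymbol{1}$ all of whose nonzero-weight structure forces minimum distance $\ge 4$ in the dual. Since $A005864$ is small here ($A005864(6) = 4$, $A005864(7) = 8$), the dimension $k$ of the code is at most $2$ when $\ell = 6$ and at most $3$ when $\ell = 7$, so there are only a handful of codes to look at: for each admissible $k$ I would list, up to coordinate permutation, the quad codes of length $\ell$ and dimension $k$, and for each one read off $q$ = (number of weight-$4$ codewords). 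This is a finite check: for $\ell = 6$, $k \in \{0,1,2\}$ and the only nontrivial possibilities are a single quad ($q=1$) or a pair of ``interlocking'' quads whose symmetric difference again has weight $4$ (giving $q = 3$, realized by a $[6,2,4]$-type code), plus the empty case $q=0$; for $\ell = 7$ one additionally gets the shortened-Hamming-type code of dimension $3$ whose seven nonzero codewords all have weight $4$, giving $q = 7$, and the intermediate configurations. Once $q$ is known for a given code $C$ of dimension $k$, Theorem~\ref{thm:fromCodes2Cards} says the minimal deck realizing it is $2^{\max(\ell - k - 1,\, \lceil \log_2 \ell\rceil)}$ — actually exactly $2^{\ell-k-1}$ once $\ell - k - 1 \ge \lceil \log_2 \ell \rceil$, which holds in all these cases — so $D(\ell, q)$ is the minimum of $2^{\ell-k-1}$ over all codes $C$ of length $\ell$ with exactly $q$ weight-$4$ words; since larger $k$ means smaller deck, we want the code with the largest dimension achieving the given $q$. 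I would tabulate this and check it matches Table~\ref{table:possibleNumQuads}.

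Concretely the bookkeeping runs as follows. For $q = 0$ we need a quad code with no weight-$4$ word; the largest-dimension such code of length $\ell$ is obtained by taking cards in ``general position'' (e.g.\ $0, 1, 2, 4, 8, \ldots$), giving $k = 0$ and $D(\ell, 0) = 2^{\ell - 1}$ — but one must double-check at small $\ell$ whether a lower-dimensional-deck example with $k > 0$ and still $q = 0$ exists, which for $\ell \le 7$ it does not beyond what the ``general position'' construction gives, because any $k \ge 1$ codeword of weight $\ge 4$ forces at least one quad once $\ell$ is this small, or else forces weight $> 4$ which is impossible for $\ell \le 7$ with an even-weight word other than $\boldsymbol 1$ itself when $\ell = 6$, and needs separate attention when $\ell = 7$ (weight-$6$ words). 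For each realizable positive $q$, I exhibit an explicit code together with an explicit card set witnessing the claimed $D(\ell, q)$ (small numeric examples like $0,1,2,3$ and extensions), and I rule out smaller decks by the dimension count in Theorem~\ref{thm:fromCodes2Cards}. Finally, for every $q$ with $1 \le q \le A005864(\ell) - 1$ that does \emph{not} arise from any quad code — the ``gaps'' in the table — I record $D(\ell,q) = \infty$, justified by the exhaustive enumeration of codes.

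The main obstacle is the combinatorial enumeration of quad codes of length $7$: one must be careful to list \emph{all} subspaces $C^* \ni \boldsymbol 1$ of the relevant dimensions up to permutation equivalence, verify in each case that the dual has minimum distance exactly $4$ (so that it really is a quad code and not something realized with a repeated card), and correctly count weight-$4$ codewords — the temptation to miss a configuration (for instance a dimension-$2$ code of length $7$ whose three nonzero words have weights $4, 4, 6$ versus one with weights $4, 4, 4$) is where an error would creep in. A clean way to organize this, which I would use, is to classify by the \emph{support pattern of the quads}: with at most a few quads among $\le 7$ cards, two distinct quads share $0$, $1$, or $2$ cards (they cannot share $3$, as $3$ cards determine a quad), and each sharing pattern, together with the requirement that every pairwise and triple-wise symmetric difference of the quad-codewords either is again a quad-codeword or has weight $\ge 4$, pins down $q$ and the dimension; this reduces the whole proposition to a short case analysis that I would present in a table mirroring Table~\ref{table:possibleNumQuads}.
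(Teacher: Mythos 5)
Your overall strategy is the same as the paper's: bound the code dimension via $A005864(\ell)$, enumerate the quad codes of length $6$ and $7$ by dimension, count weight-$4$ codewords, and read off the minimal deck from Theorem~\ref{thm:fromCodes2Cards} as $2^{\ell-k-1}$ with $k$ maximized. That machinery is right, and your key structural observations (two quads cannot share three cards; at most one weight-$6$ codeword in length $7$; the dimension-$3$ shortened-Hamming-type code giving $q=7$) match what the paper uses.

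However, your bookkeeping for the $q=0$ row contains a genuine error. You assert that the largest-dimension quad code of length $\ell\le 7$ with no weight-$4$ word has $k=0$, so that $D(\ell,0)=2^{\ell-1}$. That is false for $\ell=6$ and $\ell=7$, and it contradicts the very table you are proving: a quad code may contain codewords of weight $6$, which do not correspond to quads. For $\ell=6$ the code $\{000000,\,111111\}$ has $k=1$ and no weight-$4$ word, so $D(6,0)=2^{6-1-1}=16$, not $32$; for $\ell=7$ the code $\{0000000,\,0111111\}$ likewise gives $D(7,0)=32$, not $64$ (and one must then argue, as the paper does, that a second independent weight-$6$ word is impossible because two weight-$6$ words of length $7$ sum to a weight-$2$ word, so $k=1$ is optimal). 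Your sentence does gesture at ``$\boldsymbol{1}$ itself when $\ell=6$'' and ``weight-$6$ words'' for $\ell=7$, but the stated conclusion $D(\ell,0)=2^{\ell-1}$ is the wrong one; as written your argument would populate the $q=0$ column with $32$ and $64$ in rows $6$ and $7$. The fix is simply to treat weight-$6$ (and, for $\ell=8$ onward, weight-$\ell$) codewords as legitimate members of a no-quad code and maximize the dimension over codes whose nonzero words all have weight at least $6$.
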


\begin{table}[ht!]
\begin{center}
\begin{tabular}{|c|c|c|c|c|c|c|c|c|}
\hline
\# of Cards 	& 0	& 1            & 2            & 3            & 4 & 5            & 6 & 7            \\ \hline
1 		& 1	&              &              &              &   &              &   &              \\ \hline
2 		& 2	&              &              &              &   &              &   &              \\ \hline
3 		& 4 	&              &              &              &   &              &   &              \\ \hline
4 		& 8 	& 4 		&              &              &   &              &   &              \\ \hline
5 		& 16	& 8 		&              &              &   &              &   &              \\ \hline
6 		& 16	& 16 		&              & 8 		&   &              &   &              \\ \hline
7 		& 32	& 32		& 16		& 16		&   & 		 &   & 8 \\ \hline
\end{tabular}
\end{center}
\caption{$D(\ell,q)$ for $\ell < 8$.}
\label{table:possibleNumQuads}
\end{table}

\begin{proof}
The cases of 1 through 5 cards are discussed above. Given the number of quads, we want to find a corresponding quad code with the maximum possible number of codewords, as by Theorem~\ref{thm:fromCodes2Cards} they are realizable in smaller decks.

Suppose we have 6 cards with 0 quads. The corresponding quad code can have at most two codewords, 000000 and 111111. By Theorem~\ref{thm:fromCodes2Cards}, the code is realizable for $n = 4$.

Suppose we have 6 cards with $q$ quads, where $q >0$. Then, 111111 cannot be a codeword, as it has a distance of 2 from any codeword corresponding to a quad. Since the code is linear, the total number of codewords is a power of two, bounded by A005864(6) = 4. So, the total number of codewords, in this case, can be 2 or 4, with the corresponding number of quads being 1 or 3.

The code with 2 codewords can be 000000 and 001111. By Theorem~\ref{thm:fromCodes2Cards}, the code is realizable in the deck of size 16. The code with 4 codewords can be the following: 000000, 001111, 110011, and 111100. By Theorem~\ref{thm:fromCodes2Cards}, the code is realizable in the deck of size 8. 

Suppose we have 7 cards with $q$ quads. The number of codewords is a power of 2; and A005864(7) = 8. It means more than 7 quads is impossible. First, we show that 4, 5, or 6 quads are impossible.

We start by showing that we cannot have more than 1 codeword of weight 6. Indeed, two different codewords of length 7 and having six ones have to differ in exactly two places. That means the Hamming distance between them is 2, which is a contradiction.

Suppose we have at least 4 quads, then the total number of codewords is at least 8. As we have 0 or 1 codewords of weight 6, we have either 6 or 7 codewords of weight 4, proving that 4 or 5 quads are impossible.

Suppose we have 6 quads, meaning that we have 6 codewords of weight 4 and one codeword of weight 6. Without loss of generality, we can assume that the last digit of the codeword $\boldsymbol{c}$ of weight 6 is zero. It follows that the codewords of weight 4 have to have 1 as the last digit, as otherwise, they will be at distance 2 from the codeword $\boldsymbol{c}$. Since the codes are linear, adding two different codewords $\boldsymbol{a}_1$ and $\boldsymbol{a}_2$ of weight 4 should give us a working nonzero codeword. However, since the last digit is 1 in both codewords, the sum will have a 0 in the last digit. Thus, it must equal $c$. If there is another codeword $\boldsymbol{a}_3$ of weight 4, then again we see that $\boldsymbol{a}_1+\boldsymbol{a}_3 = \boldsymbol{c} = \boldsymbol{a}_1 + \boldsymbol{a}_2$. Thus, $\boldsymbol{a}_3 = \boldsymbol{a}_2$, creating a contradiction.

The remaining possibility is 7 quads, or, equivalently, 7 codewords of weight 4. An example of these 7 codewords is as follows: 1111000, 1100110, 0011110, 1010011, 0101011, 0110101, and 1001101. This code is realizable in a deck of size 8.

For 0 quads, the largest code contains 2 codewords: 0000000 and 01111111. By Theorem~\ref{thm:fromCodes2Cards}, it is realizable in a deck of size 32. For 1 quad, the code up to symmetries contains two codewords, 0000000 and 0001111, and cannot contain more codewords with weight 4. As we saw before, it cannot contain more than 1 codeword of length 6. Given that the total number of codewords is a power of 2, the code must contain exactly 2 codewords. By Theorem~\ref{thm:fromCodes2Cards}, the code is realizable in a deck of size 32. For 2 or 3 quads, the code needs to have 4 codewords. The code 0000000, 1111000, 0001111, and 1110111 corresponds to 2 quads, while the code 0000000, 1111000, 1100110, and 0011110 corresponds to 3 quads. By Theorem~\ref{thm:fromCodes2Cards}, these codes are realizable in a deck of size 16.
\end{proof}

Looking at Table~\ref{table:possibleNumQuads}, one can notice some patterns.

\begin{proposition}
\label{prop:nondecreasingcolumns}
Suppose that $D(\ell, q) = 2^n$ for some values of $\ell$, $q$, and $n$. Then $D(\ell+1, q)$ exists and is not greater than $2^{n+1}$.
\end{proposition}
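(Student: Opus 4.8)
The plan is to take a witnessing configuration for $D(\ell,q) = 2^n$ — that is, a set $S$ of $\ell$ distinct cards in the EvenQuads-$2^n$ deck containing exactly $q$ quads — and to enlarge it by a single card while keeping careful track of the deck size. First I would embed the EvenQuads-$2^n$ deck into the EvenQuads-$2^{n+1}$ deck in the obvious way: append a $0$ as a new last coordinate to every card. This map is injective on cards and preserves coordinatewise $\mathbb{Z}_2$-sums, hence preserves the quad relation in both directions. So the image $S'$ of $S$ is again a set of $\ell$ distinct cards with exactly $q$ quads, now living inside the larger deck.

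Next I would add one more card $\vec{v}$ to $S'$, chosen so that its new last coordinate equals $1$ — for instance $\vec{v} = (0,\ldots,0,1)$. Since every card of $S'$ has last coordinate $0$, the card $\vec{v}$ is distinct from all of them, so $S' \cup \{\vec{v}\}$ is genuinely a set of $\ell+1$ distinct cards. The key point is that adding $\vec{v}$ creates no new quad: any quad inside $S' \cup \{\vec{v}\}$ that is not already a quad of $S'$ would have to consist of $\vec{v}$ together with three cards of $S'$, but then the sum of the last coordinates of these four cards is $1 + 0 + 0 + 0 = 1 \neq 0$, so they do not sum to $\vec{0}$ and hence do not form a quad. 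Therefore $S' \cup \{\vec{v}\}$ is a set of $\ell+1$ cards from the EvenQuads-$2^{n+1}$ deck with exactly $q$ quads, which shows that $D(\ell+1,q)$ is finite and at most $2^{n+1}$.

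An equivalent route runs through Theorem~\ref{thm:fromCodes2Cards}. If $C$ is the quad code realized by the witnessing $\ell$ cards and $C$ has $2^k$ codewords, then appending a $0$ coordinate to every codeword produces a quad code $C'$ of length $\ell+1$ with the same $2^k$ codewords and the same number of weight-$4$ words, hence the same number $q$ of quads. Since $C$ is realizable in a deck of size $2^n$, Theorem~\ref{thm:fromCodes2Cards} gives $n \ge \ell-k-1$, so $n+1 \ge (\ell+1)-k-1$, and the same theorem shows $C'$ is realizable in a deck of size $2^{n+1}$.

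I do not anticipate a genuine obstacle. The only things to verify are that the added card is distinct from the existing ones and cannot belong to any new quad, and both follow at once from the choice of making its last coordinate $1$ while every old card has last coordinate $0$. One might ask whether the weaker bound $D(\ell+1,q) \le 2^n$ ever holds; that is not claimed, and indeed the code description shows the realizability inequality $n \ge \ell - k - 1$ can become tight after incrementing $\ell$, so $2^{n+1}$ is the natural bound to state here.
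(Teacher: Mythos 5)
Your proposal is correct and matches the paper's argument essentially verbatim: your added card $(0,\ldots,0,1)$ is exactly the paper's card with numerical value $2^n$, and your alternative route via appending a $0$ to every codeword is the paper's second proof. Both the distinctness of the new card and the impossibility of new quads are justified the same way (the last coordinate of any would-be quad sums to $1$), so there is nothing to add.
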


\begin{proof}
Suppose that $D(\ell, q) = 2^n$ for some values of $\ell$, $q$, and $n$. Let us denote the set of $\ell$ cards in the deck of size $2^n$ with $q$ quads as $S$.

First proof. We add an extra card to this set $S$ with a numerical value $2^n$, forming a new set $S'$. The new card cannot form quads with any set of cards in $S$. Thus, the number of quads in set $S'$ is $q$, implying that $D(\ell+1, q) \le 2^{n+1}$.

Second proof. Consider a quad code corresponding to the set of cards $S$. We append 0 to each codeword to get a quad code of length $\ell+1$ and the same dimension as code $C$. By Theorem~\ref{thm:fromCodes2Cards}, this code is realizable in the deck of size at least $2^{n+1}$. As this code has the same number of codewords of weight 4 as code $C$, the cards that realize the code form $q$ quads, implying $D(\ell+1, q) \le 2^{n+1}$.
\end{proof}

Now, we want to provide examples of sets of cards for all nonempty cells in Table~\ref{table:possibleNumQuads}. We already described the examples for up to 5 cards. In the proof of Proposition~\ref{prop:nondecreasingcolumns}, we also showed how to build an example for an entry in the table that is twice the entry in the previous row and the same column. Thus, we only need to provide examples for 6 cards and 0 quads, 6 cards and 3 quads, 7 cards and 2 quads, and 7 cards and 7 quads. All these examples are realizable in the EvenQuads-16 deck and shown in Table~\ref{tab:cardexamples} using quaternary notation.

\begin{table}[ht!]
\centering
\begin{tabular}{|c|c|c|}
\hline
\# of cards        & \# of quads & Cards                      \\ \hline
\multirow{2}{*}{6} & 0           & 00, 01, 02, 10, 20, 33     \\ \cline{2-3} 
                   & 3           & 00, 01, 02, 10, 13, 03     \\ \hline
\multirow{2}{*}{7} & 2           & 00, 02, 10, 13, 20, 21, 32 \\ \cline{2-3} 
                   & 7           & 00, 01, 02, 03, 10, 11, 12 \\ \hline
\end{tabular}
\caption{Card examples.}
\label{tab:cardexamples}
\end{table}

\section{Weight enumerator}
\label{sec:weighenumerator}

We calculated possible numbers of quads for a small number of cards. To make more progress, we want to introduce the weight enumerator.

The \textit{weight enumerator} $W(x,y)$ of a binary code $C$ is given by a polynomial
\[W(x,y) = \sum_{\boldsymbol{c} \in C} x^{\ell-w(\boldsymbol{c})}y^{w(\boldsymbol{c})},\]
where $\ell$ is the length of the code, $w(\boldsymbol{c})$ is the weight of $\boldsymbol{c} \in C$, and the sum runs over all codewords. Recall that the length of a quad code is the number of cards that define it.

\begin{example}
Suppose four cards form a quad as in Example~\ref{ex:ecc4} above. The weight enumerator is $W(x,y) = x^4 + y^4$. Suppose we have 6 cards with $q$ quads. Then, from Proposition~\ref{prop:upto7cards} and its proof, the associated weight enumerator has to be $x^6 + qx^2y^4$, where $q$ is either 0, 1, or 3.
\end{example}

There is a powerful method that can help us with more advanced examples. Let us define the dual polynomial of $W$ by $W^*$, where
\[W^* = \frac{1}{W(1,1)}W(x+y,x-y).\]

MacWilliams's statement shows how the dual code's weight enumerator relates to the original code's weight enumerator.

\begin{lemma}[MacWilliams identity]
Let $C$ be a linear binary code and $W$ be its weight enumerator. Then the dual code $C^*$ has the weight enumerator $W^*$.
\end{lemma}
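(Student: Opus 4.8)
The plan is to prove the identity via discrete Fourier analysis on the group $\mathbb{Z}_2^\ell$, following the classical argument. For a vector $\boldsymbol{u} = u_1 \cdots u_\ell \in \mathbb{Z}_2^\ell$, introduce the monomial-valued function $f(\boldsymbol{u}) = x^{\ell - w(\boldsymbol{u})} y^{w(\boldsymbol{u})}$, so that $W(x,y) = \sum_{\boldsymbol{u} \in C} f(\boldsymbol{u})$ and $W(1,1) = |C|$. The crucial structural observation is that $f$ factors coordinatewise: $f(\boldsymbol{u}) = \prod_{i=1}^\ell g(u_i)$, where $g(0) = x$ and $g(1) = y$. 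Define the Fourier transform of $f$ by $\hat{f}(\boldsymbol{v}) = \sum_{\boldsymbol{u} \in \mathbb{Z}_2^\ell} (-1)^{\boldsymbol{u} \cdot \boldsymbol{v}} f(\boldsymbol{u})$. Because $f$ and the characters $\boldsymbol{u} \mapsto (-1)^{\boldsymbol{u} \cdot \boldsymbol{v}}$ are both products over coordinates, the sum factors as
\[\hat{f}(\boldsymbol{v}) = \prod_{i=1}^\ell \Bigl( g(0) + (-1)^{v_i} g(1) \Bigr) = \prod_{i=1}^\ell \bigl( x + (-1)^{v_i} y \bigr) = (x+y)^{\ell - w(\boldsymbol{v})}(x - y)^{w(\boldsymbol{v})}.\]

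Next I would establish the Poisson summation formula for the subspace $C$: for any $f$ on $\mathbb{Z}_2^\ell$,
\[\sum_{\boldsymbol{u} \in C^*} f(\boldsymbol{u}) = \frac{1}{|C|} \sum_{\boldsymbol{v} \in C} \hat{f}(\boldsymbol{v}).\]
The proof expands the right-hand side, swaps the order of summation, and applies the orthogonality relation $\sum_{\boldsymbol{v} \in C} (-1)^{\boldsymbol{u} \cdot \boldsymbol{v}}$, which equals $|C|$ when $\boldsymbol{u} \in C^*$ and $0$ otherwise. This orthogonality in turn follows from the linearity of $C$: if $\boldsymbol{u} \notin C^*$, then $\boldsymbol{v} \mapsto \boldsymbol{u} \cdot \boldsymbol{v}$ is a nonzero linear functional on $C$, hence surjective onto $\mathbb{Z}_2$ with kernel of index $2$, so the signed sum cancels; if $\boldsymbol{u} \in C^*$, every term is $1$.

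Finally, specialize the Poisson formula to the particular $f$ above. The left-hand side is exactly $\sum_{\boldsymbol{u} \in C^*} x^{\ell - w(\boldsymbol{u})} y^{w(\boldsymbol{u})}$, the weight enumerator of $C^*$. The right-hand side, using the computation of $\hat{f}$, is $\frac{1}{|C|} \sum_{\boldsymbol{v} \in C} (x+y)^{\ell - w(\boldsymbol{v})}(x-y)^{w(\boldsymbol{v})} = \frac{1}{|C|} W(x+y, x-y)$. Since $|C| = W(1,1)$, this equals $W^*$ exactly as defined before the lemma, completing the proof.

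I expect the main obstacle to be expository rather than mathematical: one must be careful that $f$ takes values in the polynomial ring $\mathbb{Z}[x,y]$ rather than in a field, so the ``Fourier transform'' identities must be read as formal identities of polynomials, valid because every manipulation — distributing products over sums, the character orthogonality, and dividing by the integer $|C|$ — is legitimate coefficientwise. Equivalently, one can verify the identity for all real substitutions $(x,y)$ and invoke that polynomials agreeing at infinitely many points coincide. Keeping this bookkeeping clean, and isolating the character orthogonality lemma as a separate step, is the only delicate point.
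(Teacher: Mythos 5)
Your proof is correct and complete: the coordinatewise factorization of the monomial $f(\boldsymbol{u})=x^{\ell-w(\boldsymbol{u})}y^{w(\boldsymbol{u})}$, the resulting closed form $\hat f(\boldsymbol{v})=(x+y)^{\ell-w(\boldsymbol{v})}(x-y)^{w(\boldsymbol{v})}$, the character orthogonality over the subspace $C$, and the Poisson summation step are exactly the classical argument for the MacWilliams identity, and your remark that the division by $|C|$ is exact (since the inner character sum is either $|C|$ or $0$) disposes of the only bookkeeping issue in working over $\mathbb{Z}[x,y]$. Note, however, that the paper itself does not prove this lemma at all --- it states the MacWilliams identity as a known result and uses it as a black box --- so there is no in-paper proof to compare against; what you have supplied is the standard textbook proof, and it is valid.
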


In particular, $W^*$ is a polynomial with non-negative integer coefficients. Note that $W(1,1)=|C|=2^k$ is the number of codewords in $C$ because $W(1,1)$ is the sum of the coefficients of the polynomial $W(x, y)$. The dual code $C^*$ is a linear binary code where each codeword has the same length $\ell$ as the original code.

We start with an example of 7 cards. We already covered this example in Section~\ref{sec:smallnumbercards}. However, here, we want to show how to use the weight enumerator.

\begin{example}
Suppose we have 7 cards with $q$ quads, where $q$ is 4, 5, or 6. Then, the corresponding code has to have 8 codewords, and the associated weight enumerator has to be $x^7 + qx^3y^4 + (7 -q)xy^6$. Then, the dual weight enumerator is 
\[\frac{1}{8}((x+y)^7 + q(x+y)^3(x-y)^4 + (7 -q)(x+y)(x-y)^6).\]
Ignoring the multiplier $\frac{1}{8}$ we get
\[8 x^7 + (4q- 28) x^6 y + (84 - 12q) x^5 y^2 + 
 8 q x^4 y^3 + 8 q x^3 y^4 + (84 - 12q)  x^2 y^5 + (4q- 28)  x y^6+ 8 y^7.\]
 The coefficient $4q-28$ is negative for given values of $q$. Therefore, such codes cannot exist.
\end{example}

\begin{proposition}
For 8 cards, the possible numbers of quads are 0, 1, 2, 3, 5, 6, 7, and 14, realizable in the smallest decks of sizes 64, 32, 32, 32, 16, 16, 16, and 8, correspondingly.
\end{proposition}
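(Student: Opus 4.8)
The plan is to follow the same strategy used for $\ell \le 7$: translate the combinatorial question into a question about quad codes of length $8$ via Theorems~\ref{thm:fromCodes2Cards} and~\ref{thm:equivalence}, enumerate which weight enumerators are possible, and then exhibit explicit codes (equivalently, explicit card sets) realizing each surviving value, checking minimality against Theorem~\ref{thm:fromCodes2Cards}. The number of quads $q$ equals the number of weight-$4$ codewords, and $A005864(8) = 16$, so $q \le 15$; in fact the all-ones codeword $11111111$ has weight $8$ and lies at distance $4$ from nothing problematic, but any weight-$8$ codeword together with a weight-$4$ codeword would force a weight-$4$ codeword of the complementary shape, so one must track the number of weight-$6$ and weight-$8$ codewords as well. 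The dimension $k$ satisfies $2^k \le 16$, i.e. $k \le 4$, by Lemma~\ref{lemma:cwbound} (since $k \le \ell - 1 - \lceil \log_2 \ell\rceil = 8 - 1 - 3 = 4$).

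The main work is ruling out $q = 4, 8, 9, 10, 11, 12, 13, 15$ and also $q$-values that are not achievable for dimension reasons. For each candidate dimension $k \in \{1,2,3,4\}$, the code has $2^k$ codewords, so the multiset of weights $\{w(\boldsymbol{c})\}$ consists of $2^k$ even numbers in $\{0,4,6,8\}$ (weight $2$ is forbidden), with exactly one $0$. For $k=4$ ($16$ codewords) the weight enumerator is $W(x,y) = x^8 + a x^4 y^4 + b x^2 y^6 + c y^8$ with $1 + a + b + c = 16$, so $a + b + c = 15$; one shows $c \le 1$ and $b$ is constrained, then applies the MacWilliams identity: $W^* = \frac{1}{16} W(x+y, x-y)$ must have non-negative integer coefficients, which pins down $(a,b,c)$ to very few possibilities (the classic fact here is that the unique $[8,4,4]$ code is the extended Hamming code with $a = 14$, $b = 0$, $c = 1$, giving $q = 14$). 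For $k=3$ one similarly gets $W(x,y) = x^8 + a x^4 y^4 + b x^2 y^6 + c y^8$ with $a+b+c = 7$ and the non-negativity of $W^*$ forces $a \in \{0,1,2,3,5,6,7\}$ — in particular $a = 4$ is impossible, exactly paralleling the $7$-card computation in the preceding example; the cases $a \in \{5,6,7\}$ need the extra constraint that at most one weight-$6$ and one weight-$8$ codeword can coexist with that many weight-$4$ codewords, which eliminates or forces $b, c$. For $k \le 2$ the small counts give $q \in \{0,1,2,3\}$ directly. Collating across dimensions yields $q \in \{0,1,2,3,5,6,7,14\}$.

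For realizability in the claimed smallest decks, I would invoke Theorem~\ref{thm:fromCodes2Cards}: a quad code of length $8$ and dimension $k$ is realizable in the EvenQuads-$2^n$ deck iff $n \ge 8 - k - 1 = 7 - k$. So $q = 14$ (forced $k = 4$) needs $n \ge 3$, i.e. deck size $8$; $q \in \{5,6,7\}$ (forced $k = 3$) needs $n \ge 4$, deck size $16$; $q \in \{1,2,3\}$ ($k = 2$) needs $n \ge 5$, deck size $32$; and $q = 0$ ($k = 1$, the code $\{00000000, 01111111\}$) needs $n \ge 6$, deck size $64$. To finish I would give one explicit length-$8$ quad code for each value of $q$ with the minimal dimension (e.g. the extended Hamming code for $q = 14$; for $q = 7$ prepend a zero coordinate to the $7$-card example $1111000,\ldots$ of Proposition~\ref{prop:upto7cards} and check the dimension stays $3$; analogous truncations/extensions for $q = 5, 6$), and verify in each case that the dimension is exactly the minimal one so that no smaller deck works, matching the lower bound from the counting above. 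The main obstacle is the dimension-$3$ and dimension-$4$ weight-enumerator bookkeeping: the MacWilliams non-negativity conditions must be combined carefully with the elementary distance constraints on high-weight codewords to avoid missing a case, and one must double-check that each surviving enumerator is actually realized by an honest code rather than merely passing the MacWilliams test.
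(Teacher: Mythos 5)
Your overall strategy is the same as the paper's: bound the dimension via Lemma~\ref{lemma:cwbound}, classify the possible weight enumerators dimension by dimension using the MacWilliams identity together with elementary constraints on weight-$6$ and weight-$8$ codewords, exhibit explicit codes for the surviving values of $q$, and read off minimal deck sizes from Theorem~\ref{thm:fromCodes2Cards}. However, there is a concrete error in your dimension-$3$ step, and it is exactly the step on which the minimality claims for $q\in\{0,1,2,3\}$ hinge. For $k=3$ and enumerator $x^8+qx^4y^4+(7-q)x^2y^6$, the coefficient of $x^7y$ in $W(x+y,x-y)$ is $-20+4q$, so non-negativity forces $q\ge 5$, not merely $q\ne 4$. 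Your claim that $a\in\{0,1,2,3,5,6,7\}$ all survive at dimension $3$ would mean that $0$, $1$, $2$, or $3$ quads could be achieved by an $8$-card code of dimension $3$, hence realizable in the EvenQuads-$16$ deck by Theorem~\ref{thm:fromCodes2Cards} --- contradicting the claimed minimal deck sizes of $64$ and $32$. The correct conclusion, which the paper draws, is that dimension $3$ forces $q\in\{5,6,7\}$ and dimension $4$ forces $q=14$, so $q\in\{1,2,3\}$ forces $k\le 2$ and $q=0$ forces $k\le 1$ (the latter needs the separate observation that a $0$-quad code cannot contain two distinct weight-$6$ codewords, nor a weight-$6$ codeword together with the all-ones codeword, since either combination produces a codeword of forbidden weight). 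You assign $k=2$ to $q\in\{1,2,3\}$ and $k=1$ to $q=0$ only in the direction ``this $k$ suffices''; to certify the \emph{smallest} deck you must also rule out every larger $k$, and your misstated dimension-$3$ computation is precisely what leaves that open. Once the condition $-20+4q\ge 0$ is stated correctly, your plan closes the gap and coincides with the paper's argument.
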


\begin{proof}
We already know that 0, 1, 2, 3, and 7 quads are possible from Proposition~\ref{prop:nondecreasingcolumns}. We also know that the maximum number of codewords is bounded by A005864$(8)$, which is 16. The coefficient of $y^8$ in the weight enumerator is at most 1, as only one codeword of weight 8 might exist. Moreover, if 11111111 is a codeword, then there are no codewords of weight 6.

We know that the EvenQuads-8 deck has 14 quads, the maximum possible number, as every three cards complete to a quad.

Suppose the dimension of the corresponding code is 4; that is, there are 16 codewords. We consider possible weight enumerators. The enumerator  $x^8+15x^4y^4$ is impossible, as 15 quads are impossible. The enumerator $x^8+14x^4y^4+y^8$ corresponds to 14 quads, which we already discussed. We are left to investigate enumerators $x^8+qx^4y^4+(15-q)x^2y^6$, where $q$ is the number of quads. The dual weight enumerator (ignoring the multiplier $1/16$) is
\begin{multline*}
(x + y)^8 + q (x + y)^4 (x - y)^4 + (15 - q) (x + y)^2 (x - y)^6 = \\
16x^8 + (-52+4q)x^7y + (88-8q)x^6y^2 + (116-4q)x^5y^3 + (-80+16q)x^4y^4 + \\
(116-4q)x^3y^5 + (88-8q)x^2y^6 + (-52+4q)xy^7 + 16y^8.
\end{multline*}
The coefficient $-52+4q$ is negative for $q<13$, and the coefficient $88-8q$ is negative for $q>11$, so the code with such an enumerator does not exist. It follows that we cannot have 8 to 13 quads inclusive.

Suppose the dimension of the corresponding code is 3; that is, there are 8 codewords. We know that it is possible to have 7 quads with 7 cards. That means it is possible to have the same number of quads with 8 cards. We know that the dimension of the corresponding code cannot be 4; therefore, it has to be 3. It follows that 7 quads with 8 cards are realizable in the EvenQuads-16 deck.

For fewer quads, we need to check the weight enumerators
\[x^8+qx^4y^4+(7-q)x^2y^6 \quad \textrm{ and } \quad x^8+6x^4y^4+y^8.\]
Consider the dual enumerator for the first one:
\begin{multline*}
(x+y)^8+q(x+y)^4(x-y)^4+(7-q)(x+y)^2(x-y)^6 \\
=8x^8+(-20+4q)x^7y+(56-8q)x^6y^2+(84-4q)x^5y^3+(16q)x^4y^4 \\
+(84-4q)x^3y^5+(56-8q)x^2y^6+(-20+4q)xy^7+8y^8.
\end{multline*}
The coefficient $-20+4q$ is negative for $q<5$, so we cannot have a code of dimension 3 with fewer than 5 quads. Now, we show that 5 and 6 quads are possible.

We start with 5 quads. Our weight enumerator suggests that we have two codewords of weight six. Without loss of generality, we can assume that these are codewords 00111111 and 11001111. We now pick one codeword of weight 4 such that it is distance 4 from both the codewords of weight 6. We use the codeword 01010011. By linearity, the other codewords are 11110000, 01101100, 10011100, and 10100011. By Theorem~\ref{thm:fromCodes2Cards}, this code is realizable in the EvenQuads-16 deck.

The 6 quad set does have an elegant associated code, where the codewords are 
\[00000000,\ 11110000,\ 00001111,\ 11001100,\ 00110011,\ 00111100,\ 11000011,\ 11111111.\] 
Again, this code is realizable in the EvenQuads-16 deck.

Suppose we have 0 quads; then the code can contain codewords of length 6 or 8. Only one codeword of length 8 exists. In addition, if the code has two different codewords of length six, their bitwise XOR has length 2, 4, or 8. Each of these lengths is forbidden. Thus, the maximum dimension of such code is 2. By Theorem~\ref{thm:fromCodes2Cards}, this code is realizable in the EvenQuads-64 deck.

Suppose we have 1 quad. Without loss of generality, we can assume that the corresponding codeword is 11110000. We cannot have a codeword of weight 8, as otherwise 00001111 will also be a codeword, thus adding another quad. Then, if there is a weight-6 codeword, its last 4 digits must be 1. Without loss of generality, we can assume that it is 00111111. Then the only other possible weight-6 codeword is 11001111. Thus, the maximum dimension is 4. By Theorem~\ref{thm:fromCodes2Cards}, this code is realizable in the EvenQuads-32 deck.

Suppose we have 2 quads. Per our previous discussion, the dimension of the code has to be 2. One example of such code is 00000000, 00001111, 11110000, and 11111111. This code is realizable in the EvenQuads-32 deck.

Suppose we have 3 quads. Again, the code dimension has to be 2. One example of such code is 00000000, 00001111, 00110011, and 00111100. This code is realizable in the EvenQuads-32 deck.

Thus, the only possible numbers of quads in a set of 8 cards are 0, 1, 2, 3, 5, 6, 7, and 14, and they are realizable in the decks of sizes 64, 32, 32, 32, 16, 16, 16, and 8, correspondingly.
\end{proof}

\begin{example}
The example of 5 quads among 8 cards in EvenQuads-16 deck is, using quaternary notation, 00, 01, 02, 03, 10, 20, 30, and 33. The example of 6 quads among 8 cards in EvenQuads-16 deck is, using quaternary notation, 03, 11, 12, 13, 21, 30, 31, and 33.
\end{example}

The weight enumerator was very useful to exclude some cases. However, 8 cards are not too many, and it might be possible to prove the proposition above without the weight enumerator. For example, one can prove that the maximum number of codewords of weight 6 is 2. Indeed, the 2 zeros in two different codewords of weight 6 cannot overlap, as otherwise, adding them will give a codeword of weight 2. For the sake of contradiction, we assume there are 3 codewords of weight 6. Without loss of generality, assume these 3 codewords are 00111111, 11001111, and 11110011. Then, we can add the three codewords to get a codeword with 2 ones, which is a contradiction.

\section{No-quads sets}
\label{sec:noquads}

Suppose we want to find sets of cards that do not contain any quads. We call such sets \textit{no-quads} sets. These sets are similar to no-sets in the game of SET, which are also called cap sets \cite{BB}. Mathematicians are very interested in finding the maximum number of cards in a no-set. Similarly, they are interested in finding the number of cards in a no-quads set. Let $F(n)$ be the maximum number of cards in a no-quads set in the EvenQuads-$2^n$ deck. For the standard deck, it was shown in \cite{CragerEtAl} that $F(6) = 9$.

Translating this to codes, we can say that we are looking for quad codes of length $\ell$ that do not contain codewords of length 4. In other words, we are looking for quad codes with a minimum distance of 6. A standard technique gives a bijection between quad codes of length $\ell$ and distance at least 6 and linear binary codes of length $\ell-1$ and distance at least 5. We can remove the last digit from a quad code $C$ to get a linear code of length $\ell-1$ where the distance decreases by not more than one. On the other hand, given a linear binary code $D$ of length $\ell-1$ and smallest positive weight 5, we can obtain a quad code by appending a parity-check bit to the end of every codeword in $D$. Thus, our task is equivalent to finding linear binary codes of length $\ell -1$ with a minimum distance of 5.

\subsection{Small decks}

Let $B(\ell)$ be the largest possible number of codewords of length $\ell$ in a linear binary code with a minimal distance of at least 6. Table~\ref{table:b} shows the values of $B(\ell)$ for $\ell \le 12$, which we manually calculated.

\begin{table}[ht!]
\begin{center}
\begin{tabular}{|c|c|c|c|c|c|c|c|c|c|c|c|c|}
\hline
$\ell$		& 1 & 2 & 3 & 4 & 5 & 6 & 7 & 8 & 9 & 10 & 11 & 12             \\ \hline
$B(\ell)$	& 1 & 1 & 1 & 1 & 1 & 2 & 2 & 2 & 4 & 4  & 4  & 8            \\ 
\hline
\end{tabular}
\end{center}
\caption{Values of $B(\ell)$ for small $\ell$.}
\label{table:b}
\end{table}

We give examples of codewords in $B(\ell)$ where $\ell$ is 1, 6, 9, and 12. Other examples with the same values of $B(\ell)$ can be constructed by appending zeros. When $\ell =1$, the only codeword is 0. When $\ell = 6$, the 2 codewords are 000000 and 111111. When $\ell = 9$, an example of a code with 4 codewords is 000000000, 111111000, 000111111, and 111000111. When $\ell = 12$, then $B(12) = 8$, and an example is the code with codewords 000000000000, 111111000000, 000000111111, 111111111111, 000111111000, 111000111000, 111000000111, and 000111000111.

\begin{theorem}
\label{thm:noquads}
If $\ell > 0$, we have $F(\ell-\log_2 B(\ell)-1)\ge \ell$. In addition, if $B(\ell) = B(\ell+1)$, then $F(\ell-\log_2 B(\ell)-1) = \ell$.
\end{theorem}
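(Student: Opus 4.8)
The plan is to rephrase a no-quads set of $\ell$ cards as a quad code of length $\ell$ with minimum distance at least $6$, compute the largest dimension such a code can have, and then invoke Theorem~\ref{thm:fromCodes2Cards}. As a first step I would pin down that this largest dimension is exactly $\log_2 B(\ell)$. Puncturing a quad code of length $\ell$ and distance $\ge 6$ in its last coordinate is dimension-preserving, because a codeword in the kernel of this map would have weight at most $1$, which is impossible; the result is a linear binary code of length $\ell-1$ with minimum distance $\ge 5$. Conversely, appending an overall parity-check bit to a linear binary code of length $\ell-1$ and distance $\ge 5$ is dimension-preserving and yields a quad code of length $\ell$ with distance $\ge 6$ (an odd weight $5$ becomes $6$, and every other present weight, being $\ge 5$, maps to a weight $\ge 6$). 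So the extremal dimension equals $\log_2$ of the largest size of a linear binary code of length $\ell-1$ and distance $\ge 5$. The same puncturing-and-extension maps applied to codes of lengths $\ell$ and $\ell-1$ identify that quantity with $B(\ell)$ itself, so the extremal dimension is $\log_2 B(\ell)$, an integer since $B(\ell)$ is the optimum over \emph{linear} codes and hence a power of $2$.

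For the inequality, set $k = \log_2 B(\ell)$ and $n = \ell - k - 1$. By Lemma~\ref{lemma:cwbound} we have $k \le \ell - 1 - \lceil \log_2 \ell \rceil$, so $n \ge 0$ and the EvenQuads-$2^n$ deck is meaningful. Choose a quad code $C$ of length $\ell$, dimension $k$, and minimum distance $\ge 6$ (it exists by the previous step). Since $n = \ell - k - 1$, Theorem~\ref{thm:fromCodes2Cards} produces $\ell$ cards in the EvenQuads-$2^n$ deck realizing $C$; because $C$ has no codeword of weight $4$, these cards contain no quad, so $F(\ell - \log_2 B(\ell) - 1) = F(n) \ge \ell$.

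Now assume $B(\ell) = B(\ell+1)$ and, for contradiction, that the EvenQuads-$2^n$ deck with $n = \ell - \log_2 B(\ell) - 1$ has a no-quads set of more than $\ell$ cards. Every subset of a no-quads set is again a no-quads set, so there is a no-quads set of exactly $\ell+1$ cards; it realizes a quad code $C'$ of length $\ell+1$, minimum distance $\ge 6$, and some dimension $k'$. Theorem~\ref{thm:fromCodes2Cards} forces $n \ge (\ell+1) - k' - 1$, i.e.\ $k' \ge \ell - n = \log_2 B(\ell) + 1$. But the first step, applied with $\ell+1$ in place of $\ell$, gives $k' \le \log_2 B(\ell+1) = \log_2 B(\ell)$, contradicting $k' \ge \log_2 B(\ell)+1$ (here we use $B(\ell) \ge 1$). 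Hence $F(n) \le \ell$, which together with the previous paragraph gives $F(\ell - \log_2 B(\ell) - 1) = \ell$.

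The applications of Theorem~\ref{thm:fromCodes2Cards} and the observation that subsets of no-quads sets are no-quads sets are routine. The only place that warrants care — the main obstacle, such as it is — is the identification of the extremal dimension with $\log_2 B(\ell)$: one must check that neither passing to the ``length $\ell-1$, distance $\ge 5$'' picture nor reconciling that picture with the length-$\ell$ quantity $B(\ell)$ costs any dimension, i.e.\ that both the overall-parity-check extension and the single-coordinate puncturing are injective on the relevant families of codes.
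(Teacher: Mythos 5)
Your proof is correct and follows essentially the same route as the paper's: realize an extremal distance-$6$ quad code via Theorem~\ref{thm:fromCodes2Cards} for the lower bound, and derive a contradiction from the realizability inequality for the upper bound when $B(\ell)=B(\ell+1)$. The extra care you take is welcome but not a different method: you make explicit the puncture/parity-extension identification of the extremal dimension with $\log_2 B(\ell)$ (which the paper only sketches as a ``standard technique'' before the theorem), and you pass to a no-quads subset of exactly $\ell+1$ cards, which tidies a slightly loose step in the paper's contradiction argument.
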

\begin{proof}
Consider a quad code $C$ of length $\ell$ with $B(\ell)$ codewords. Theorem~\ref{thm:fromCodes2Cards} implies there is a set of cards in the EvenQuads-$2^{\ell-\log_2 B(\ell)-1}$ deck that realizes $C$. Since all codewords of $C$ have at least 6 ones, this set of cards must be a no-quad set. Since $C$ has length $\ell$, this set contains $\ell$ cards, proving the first part of the theorem.

Assume to the contrary that $B(\ell) = B(\ell + 1)$ but $F(\ell-\log_2 B(\ell)-1) > \ell$, i.e. the maximum number of cards from a deck with size $2^n$ in a no-quads set has more than $\ell$ cards where, from the definition of $F(n)$, we have $n = \ell - \log_2 B(\ell) - 1$. So, there is a code $C$ corresponding to this no-quads set that has length more than $\ell$. From the definition of $B(\ell)$, the number of codewords in $C$ is not more than $B(\ell + 1) = B(\ell)$. By Theorem 3, code $C$ is only realizable in a deck of dimension $n \ge \ell+1 - \log_2 B(\ell) - 1 = \ell - \log_2 B(\ell)$. This contradicts the fact that $n = \ell - \log_2 B(\ell) - 1$.
\end{proof}

We can use Theorem~\ref{thm:noquads} and Table~\ref{table:b} to calculate the no-quads sets for small decks. Table~\ref{table:a} shows such values, where the first row is $n$, corresponding to the deck size of $2^n$, and for the second row we choose $\ell$ so that $B(\ell) = B(\ell+1)$ and $n = \ell-\log_2 B(\ell)-1$. This allows us to calculate $F(n)$ precisely.

The lemma above confirms that $F(1)=2$ (when $\ell=2$), $F(2)=3$ (when $\ell=3$), $F(3)=4$ (when $\ell=4$), $F(4)=6$ (when $\ell=6$), $F(5)=7$ (when $\ell=7$), $F(6)=9$ (when $\ell=9$), $F(7)=10$ (when $\ell=10$), and $F(8)=12$ (when $\ell=12$).

\begin{table}[ht!]
\begin{center}
\begin{tabular}{|c|c|c|c|c|c|c|c|}
\hline
$n$			& 1 & 2 & 3 & 4 & 5 & 6 & 7            \\ \hline
$\ell$ and $F(n)$	& 2 & 3 & 4 & 6 & 7 & 9 & 10              \\ 
\hline
\end{tabular}
\end{center}
\caption{Values of $F(\ell)$ for small $\ell$.}
\label{table:a}
\end{table}

\subsection{Bounds for no-quads sets}

In \cite{CragerEtAl}, the size of a no-quads set was only calculated for a standard deck. Here, we provide bounds for any deck size using bounds from coding theory \cite{ConwaySloane}. In particular, we use the notion of a Hamming bound. Let $A_{q}(\ell,d)$ denote the maximum possible size of a $q$-ary block code $C$ of length $\ell$ and minimum Hamming distance $d$. Then, the Hamming bound is:
\[A_{q}(\ell,d) \leq \frac{q^{\ell}}{\sum _{{k=0}}^{t}{\binom {\ell}{k}}(q-1)^{k}},\]
where $t=\left\lfloor {\frac {d-1}{2}}\right\rfloor$.

In particular, this bound estimates $B(\ell)$ for larger values: $B(\ell) \le A_{2}(\ell-1,5)$, which allows us to get a bound for any sized deck.

\begin{theorem}
If a no-quad set of $\ell$ cards exists in some deck, then the number of cards in the deck must be at least $\frac{\ell^2- \ell +2}{2}$. 
\end{theorem}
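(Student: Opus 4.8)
The plan is to translate the no-quad condition into a statement about quad codes, apply Theorem~\ref{thm:fromCodes2Cards} to lower-bound the deck size in terms of the code dimension, and then use the Hamming bound to upper-bound that dimension. Concretely, suppose we have a no-quad set of $\ell$ cards in an EvenQuads-$2^n$ deck. Let $C = C_A$ be the quad code it realizes; it has length $\ell$, some dimension $k$ (so $|C| = 2^k$), all weights even, and no codeword of weight $4$, hence minimum distance at least $6$. Since the code is realizable in the deck of size $2^n$, Theorem~\ref{thm:fromCodes2Cards} gives $n \ge \ell - k - 1$, so $2^n \ge 2^{\ell - k - 1}$. Thus it suffices to show $2^{\ell-k-1} \ge \frac{\ell^2 - \ell + 2}{2}$, equivalently $2^k = |C| \le \frac{2^{\ell}}{\ell^2 - \ell + 2}$.

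Next I would bound $|C|$ by $B(\ell)$, the largest number of codewords in a linear binary code of length $\ell$ with minimum distance at least $6$ (by definition of $B(\ell)$, since $C$ is such a code). Then I would invoke the bijection already recorded in the text between quad codes of length $\ell$ with distance $\ge 6$ and linear binary codes of length $\ell - 1$ with distance $\ge 5$ (delete the parity-check coordinate), which preserves the number of codewords, to conclude $B(\ell) \le A_2(\ell - 1, 5)$. Finally, apply the Hamming bound with $q = 2$, block length $\ell - 1$, and $d = 5$, so $t = \lfloor (d-1)/2 \rfloor = 2$:
\[
A_2(\ell-1, 5) \le \frac{2^{\ell-1}}{\binom{\ell-1}{0} + \binom{\ell-1}{1} + \binom{\ell-1}{2}}.
\]

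It remains a short computation to identify the denominator: $1 + (\ell - 1) + \binom{\ell-1}{2} = \frac{2 + 2(\ell-1) + (\ell-1)(\ell-2)}{2} = \frac{(\ell-1)\ell + 2}{2} = \frac{\ell^2 - \ell + 2}{2}$. Substituting back gives $|C| \le B(\ell) \le A_2(\ell-1,5) \le \frac{2^{\ell-1}}{(\ell^2-\ell+2)/2} = \frac{2^{\ell}}{\ell^2 - \ell + 2}$, which is exactly the inequality needed, and combining with $2^n \ge 2^{\ell-k-1} = 2^{\ell-1}/|C|$ yields $2^n \ge \frac{\ell^2 - \ell + 2}{2}$.

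I do not expect a serious obstacle here: the argument is a direct chain of inequalities, with the only care needed being (i) checking that the deleted-coordinate map genuinely preserves the codeword count so that $B(\ell) \le A_2(\ell-1,5)$ holds, and (ii) the purely algebraic simplification of the Hamming-bound denominator to $\tfrac{1}{2}(\ell^2-\ell+2)$. If I wanted to avoid relying on the bijection, I could instead apply the singleton-deletion step implicitly and bound $B(\ell)$ directly, but routing through $A_2(\ell-1,5)$ as the text already sets up is the cleanest path.
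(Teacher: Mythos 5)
Your proposal is correct and follows essentially the same route as the paper: pass to the quad code of length $\ell$ and minimum distance at least $6$, puncture one coordinate to get a code of length $\ell-1$ and distance at least $5$, apply the Hamming bound with $t=2$ to get $2^k \le \frac{2^{\ell}}{\ell^2-\ell+2}$, and combine with the inequality $n \ge \ell-k-1$ from Theorem~\ref{thm:fromCodes2Cards}. The only cosmetic difference is that you route the bound through $B(\ell)$ and state the realizability direction of Theorem~\ref{thm:fromCodes2Cards} a bit more explicitly; the substance is identical.
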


\begin{proof}
Consider a no-quads set with $\ell$ cards. It corresponds to a quad code of length $\ell$ with a minimal distance of at least 6. Delete any bit, producing a binary code of length $\ell - 1$ with a minimal distance of at least 5. In the Hamming bound above, we substitute $q=2$, $d=5$, and use $\ell-1$ for the length of the code to get
\[A_{2}(\ell-1,5)\leq \frac{2^{\ell-1}}{\sum _{{k=0}}^{2}{\binom {\ell-1}{k}}(2-1)^{k}} = \frac{2^{\ell-1}}{\binom {\ell-1}{0} + \binom {\ell-1}{1} + \binom {\ell-1}{2}} =\frac{2^{\ell}}{\ell^2 - \ell + 2}.\]
The dimension $k$ of the code satisfies $2^k \le \frac{2^\ell}{\ell^2 - \ell + 2}$ implying that $2^{\ell-k} \ge \ell^2-\ell+2$. 
Then, by Theorem~\ref{thm:fromCodes2Cards} this code can be realized in the deck with at least $2^{\ell-k-1} = 2^{\ell - 1}\cdot \frac{\ell^2-\ell+2}{2^\ell} = \frac{\ell^2- \ell +2}{2}$ cards.
\end{proof}

The values of $\frac{\ell^2-\ell+2}{2}$ for $\ell$ from 1 to 20 are
\[1,\ 2,\ 4,\ 7,\ 11,\ 16,\ 22,\ 29,\ 37,\ 46,\ 56,\ 67,\ 79,\ 92,\ 106,\ 121,\ 137,\ 154,\ 172,\ 191.\]
They start a famous sequence A000124 in the OEIS database \cite{OEIS} called the lazy caterer's numbers. The sequence describes the maximum number of pieces formed when slicing a pizza with $\ell$ straight cuts. The sequence can also be described as triangular numbers plus 1.

We summarize the lower bounds for small deck dimensions in Table~\ref{tab:lbds}, where we put the value in bold when we know the bound is tight. The value in the second row is $\lceil \log_2 \textrm{A000124}(\ell) \rceil$.

\begin{table}[ht!]
\centering
\begin{tabular}{|c|c|c|c|c|c|c|c|c|c|c|c|c|}
\hline
 $\ell$ & 4 & 5 & 6 & 7 & 8 & 9 & 10 & 11 & 12 & 13 & 14 & 15  \\ \hline
$n$ & \textbf{3} & 4 & 4 & \textbf{5} & 5 & \textbf{6} & 6 & 6 & 7 & 7 & 7 & 7 \\ \hline
\end{tabular}
\caption{Lower bounds for the deck dimension given the size of no-quads set.}
\label{tab:lbds}
\end{table}

On the other hand, we can estimate when the no-quads set of cards has to exist.

\begin{theorem}
If $\binom{\ell}{4}+3< 2^n$, there exists a no-quad set of $\ell$ cards in an EvenQuads-$2^n$ deck.
\end{theorem}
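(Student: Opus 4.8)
The plan is to use the first-moment method, leveraging the expected-quad-count formula already recorded in Section~\ref{sec:preliminaries}. First I would dispose of the degenerate bookkeeping. The hypothesis $\binom{\ell}{4}+3<2^n$ forces $2^n>3$, hence $n\ge 2$ and $2^n-3\ge 1$; it also gives $2^n>\binom{\ell}{4}\ge\ell$ when $\ell\ge 5$, while for $\ell\le 4$ we have $2^n\ge 4\ge\ell$. So the EvenQuads-$2^n$ deck genuinely contains at least $\ell$ distinct cards, and dividing by $2^n-3$ below is legitimate.

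Next I would set up the random experiment. Choose a set $S$ of $\ell$ cards uniformly at random among all $\binom{2^n}{\ell}$ such subsets, and let $X$ be the number of quads contained in $S$. Writing $X$ as a sum of indicator variables over the $\binom{\ell}{4}$ four-element subsets of $S$, and noting that any fixed four positions inside $S$ carry a uniformly random four-element subset of the whole deck, linearity of expectation gives $\mathbb{E}[X]=\binom{\ell}{4}\cdot p$, where $p$ is the probability that four uniformly random distinct cards form a quad. As explained in Section~\ref{sec:preliminaries}, any three cards extend uniquely to a quad, so among the $2^n-3$ cards distinct from a fixed triple exactly one completes a quad; equivalently, $p = \frac{\binom{2^n}{3}/4}{\binom{2^n}{4}} = \frac{1}{2^n-3}$. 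Hence $\mathbb{E}[X]=\frac{\binom{\ell}{4}}{2^n-3}$.

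Finally, the hypothesis is equivalent to $\binom{\ell}{4}<2^n-3$, so $\mathbb{E}[X]<1$. Since $X$ is a non-negative integer, there must exist a choice of $S$ with $X=0$, and this $S$ is a no-quad set of $\ell$ cards in the EvenQuads-$2^n$ deck, as desired.

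There is essentially no hard step here: the heart of the argument is a textbook expectation computation, and the only thing requiring care is the preliminary check that $2^n-3>0$ and that the deck has room for $\ell$ distinct cards, both of which follow at once from $\binom{\ell}{4}+3<2^n$. One could alternatively give a greedy construction, adding cards one at a time while forbidding the at most $\binom{j}{3}$ quad-completions of triples among the $j$ already chosen (together with the $j$ chosen cards themselves); this is valid whenever $(\ell-1)+\binom{\ell-1}{3}<2^n$, a comparable bound, but it is the expectation argument that reproduces exactly the inequality stated in the theorem, so that is the approach I would present.
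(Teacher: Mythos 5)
Your proposal is correct and is essentially the paper's own argument: the paper also uses the probabilistic method with the quad probability $\frac{1}{2^n-3}$, bounding the probability that a quad exists by $\frac{\binom{\ell}{4}}{2^n-3}$ (a union bound, which is the same computation as your first-moment bound $\mathbb{E}[X]<1$). Your extra bookkeeping that the deck contains at least $\ell$ cards is a minor refinement the paper omits, but the approach is the same.
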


\begin{proof}
We use the probabilistic method. Take a random set of $\ell$ cards. The number of sets of four cards among these cards is $\binom{\ell}{4}$. Each has a probability of $\frac{1}{2^n-3}$ of being a quad. So the probability that a quad exists among our $\ell$ cards is at most $\frac{\binom{\ell}{4}}{2^n-3}$. Consequently, if $\binom{\ell}{4} < 2^n-3$, then there must exist a no-quad set of $k$ cards.
\end{proof}

We summarize the upper bounds for small sizes of no-quads sets in Table~\ref{tab:lbc}, where we put the value in bold when we know the bound is tight.

\begin{table}[H]
\centering
\begin{tabular}{|c|c|c|c|c|c|c|c|c|}
\hline
$n$ & 3 & 4 & 5 & 6 & 7 & 8 & 9 & 10 \\ \hline
$\ell$ & \textbf{4} & \textbf{5} & 6 & 7 & 8 & 10 & 12 & 14 \\ \hline
\end{tabular}
\caption{Upper bounds for the number of cards in a no-quads set given the deck dimension.}
\label{tab:lbc}
\end{table}

\section{Error-correcting codes and quad squares}
\label{sec:squares}

In our paper studying quad squares \cite{QuadSquares}, we defined a \textit{semimagic quad square} as a 4-by-4 square of EvenQuads cards such that each row and column forms a quad. 

We can index the 16 cards in a square by numbers from 1 to 16 going left-to-right and top-to-bottom. Then, the property that the set of cards forms a semimagic square is equivalent to the fact that the corresponding quad code contains the four codewords 
\[1111000000000000,\ 0000111100000000,\ 0000000011110000,\ 0000000000001111\]
corresponding to rows, and the four codewords
\[1000100010001000,\ 0100010001000100,\ 0010001000100010,\ 0001000100010001\]
corresponding to columns. We denote this code as  $C_s$, which is a vector space of dimension 7, as 4 codewords corresponding to rows sum to the same value as 4 codewords corresponding to columns.

Similarly, a \textit{magic quad square} is a 4-by-4 square of EvenQuads cards such that each row, column, and diagonal forms a quad. That means the corresponding code $C_m$ also must contain two codewords
\[1000010000100001 \textrm{ and } 0001001001001000\]
corresponding to diagonals. The codewords span the vector space of dimension 8. We add two vectors to $C_s$, but we have one more dependency: rows 1 and 4 plus columns 2 and 3 plus both diagonals sum to 0.

Both codes contain the codeword consisting of all ones. This means there is a bijection between codewords of weights $w$ and $16-w$. Two codewords correspond to each other if their bitwise XOR equals all ones. That means the weight enumerator has to be a symmetric function. We wrote a program to calculate the weight enumerator for both codes.

The weight enumerator for $C_s$ is
\[W_{C_s}(x, y) = x^{16} + 8x^{12}y^4 + 16x^{10}y^6 + 78x^8y^8 + 16x^6y^{10} + 8x^4y^{12} + y^{16}.\]
This weight enumerator can be calculated manually, too. For example, the codewords of weight 6 are in a bijection with codewords that can be formed as a bitwise XOR of two codewords that correspond to a row and a column of the square. Thus, there should be 16 of them.

The weight enumerator for $C_m$ is
\[W_{C_m}(x, y) = x^{16} + 12x^{12}y^4 + 64x^{10}y^6 + 102x^8y^8 + 64x^6y^{10} + 12x^4y^{12} + y^{16}.\]
That means every magic quad square contains 12 quads: 4 rows, 4 columns, 2 diagonals, and 2 more. For the two extra quads, we observe that
$1000010000100001+0100010001000100+0000000011110000+0001000100010001+1111000000000000
=001000110000100$. Thus, 001000110000100 is a quad, and by symmetry, 0100100000010010 is too. We can visualize them in the square as broken diagonals in Table~\ref{tab:extraquads}, where X marks cards for one quad and Y for the other. 

\begin{table}[ht!]
\centering
\begin{tabular}{|c|c|c|c|}\hline
  & X & Y &   \\\hline
X &   &   & Y \\\hline
Y &   &   & X \\\hline
  & Y & X &   \\\hline
\end{tabular}
\caption{Two extra quads in a magic square.}
\label{tab:extraquads}
\end{table}

In \cite{QuadSquares}, a \textit{strongly magic quad square} was defined to be a square such that for any four cards, if their $x$-coordinates are all the same, half and half, or all different, and their $y$-coordinates are all the same, half and half, or all different, then the four cards form a quad. Equivalently, if the coordinates of four cards form a quad, the cards form a quad. We denote this code as $C_{sm}$. Its dimension is 11 with the weight enumerator
\[W_{C_{sm}}(x, y) = x^{16} + 140x^{12}y^4 + 448x^{10}y^6 + 870x^8y^8 + 448x^6y^{10} + 140x^4y^{12} + y^{16}.\]
The code $C_{sm}$ is special: If we drop any particular bit from all codewords, we will get a code of length 15 with Hamming distance 3 and dimension 11. That means the result is a perfect code. In particular, it is the Hamming(15,11) code.

\begin{theorem}
For each type of quad square (semimagic, magic, strongly magic), Table~\ref{table:dimensions} shows how many corresponding codes are in each dimension and the smallest size of the deck of cards that realizes it.
\end{theorem}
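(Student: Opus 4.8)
The plan is to convert the statement about squares into one about codes and then carry out a finite enumeration. By construction, the quad code of a $4\times 4$ square that is semimagic (resp.\ magic, strongly magic) contains $C_s$ (resp.\ $C_m$, $C_{sm}$). Conversely, by Theorem~\ref{thm:fromCodes2Cards} any quad code $C$ of length $16$ containing $C_s$ (resp.\ $C_m$, $C_{sm}$) is realized by $16$ cards, and reading those cards into a $4\times 4$ grid in the fixed index order produces a square of the corresponding type, since the row/column (and diagonal, and all quad-coordinate) subsets become quads exactly when their indicator vectors lie in $C$; moreover the smallest deck in which this can be done has size $2^{15-k}$ with $k=\dim C$. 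By Theorem~\ref{thm:equivalence} two sequences of cards realize the same code precisely when they differ by an affine transformation, so ``how many codes of a given dimension'' is the correct count. Thus the theorem reduces to: for each of the three containments, count the quad codes of length $16$ above the given code, sorted by dimension; the deck-size column is then forced to be $2^{15-k}$.

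The strongly magic case is immediate. The excerpt identifies $C_{sm}$ with the extended Hamming $[16,11,4]$ code, whose size $2^{11}$ equals $A_2(16,4)$; hence no binary code of length $16$ with minimum distance at least $4$ can properly contain $C_{sm}$. So $C_{sm}$ is the only code, its dimension is $11$, and the smallest realizing deck has size $2^{15-11}=16$. For the semimagic and magic cases I would first fix the range of dimensions: a quad code $C$ with $C_s\subseteq C$ (resp.\ $C_m\subseteq C$) has $\dim C\ge 7$ (resp.\ $\ge 8$) trivially and $\dim C\le 11$ since a length-$16$, distance-$\ge 4$ code has at most $A_2(16,4)=2^{11}$ codewords; every intermediate dimension occurs because $C_{sm}/C_s$ and $C_{sm}/C_m$ contain subspaces of every dimension up to $4$ and up to $3$ respectively, and any subcode of $C_{sm}$ is automatically a quad code. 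This already determines the deck-size entries: $256,128,64,32,16$ for semimagic and $128,64,32,16$ for magic.

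It remains to produce the counts. Every codeword of a quad code has even weight, so every quad code above $C_\star$ (for $\star\in\{s,m\}$) lies in the even-weight code $C_e$ of dimension $15$; such codes correspond bijectively to subspaces of the quotient $C_e/C_\star$, which has dimension $8$ for $C_s$ and $7$ for $C_m$, and a subspace yields a genuine quad code exactly when it contains no image of a weight-$2$ vector. I would compute this forbidden set directly: two distinct weight-$2$ vectors are congruent modulo $C_\star$ iff they are the two halves of some weight-$4$ codeword of $C_\star$, and since $C_s$ has $8$ weight-$4$ codewords (the rows and columns) and $C_m$ has $12$ (rows, columns, the two diagonals, the two broken diagonals), any two of which meet in at most one cell, the $120$ weight-$2$ vectors occupy $96$ cosets of $C_s$ and $84$ cosets of $C_m$. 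Hence one must enumerate, by dimension, the subspaces of $\mathbb{F}_2^{8}$ (resp.\ $\mathbb{F}_2^{7}$) avoiding a specific set of $96$ (resp.\ $84$) nonzero vectors, with dimension additionally capped at $4$ (resp.\ $3$) by the bound above. I expect this to be carried out by computer, using the affine automorphism group of the square acting on the quotient to prune the search and organize the tally; the chief obstacle is not any individual entry but making this enumeration provably exhaustive and free of double counting. As consistency checks I would verify that each resulting code has a weight enumerator with non-negative coefficients whose MacWilliams dual also does, and that the counts respect the nesting $C_s\subset C_m\subset C_{sm}$.
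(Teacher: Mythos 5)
Your reduction is sound and your route is genuinely different from the paper's. The paper does not enumerate codes directly: it quotes the closed-form counting formulas for semimagic and magic quad squares from \cite{QuadSquares}, observes that each formula is organized as a sum over affine-equivalence classes grouped by the deck size they span, and simply reads the table entries off as the coefficients $112, 2823, 2531, 159, 1$ and $10, 85, 43, 1$; the deck-size row then comes from Theorem~\ref{thm:fromCodes2Cards} with $n = 15-k$, exactly as you argue. Your approach instead counts the codes intrinsically, as subspaces of $C_e/C_s \cong \mathbb{F}_2^8$ (resp.\ $C_e/C_m \cong \mathbb{F}_2^7$) avoiding the images of the weight-$2$ vectors; your bookkeeping there is correct (each of the $8$, resp.\ $12$, weight-$4$ codewords of $C_s$, resp.\ $C_m$, pairs up $6$ weight-$2$ vectors into $3$ cosets, and since no two of those codewords share two cells the $120$ weight-$2$ vectors do occupy $96$, resp.\ $84$, nonzero cosets). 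Your argument for the strongly magic row --- $|C_{sm}| = 2^{11}$ meets the bound of Lemma~\ref{lemma:cwbound} for $\ell=16$, so no quad code properly contains $C_{sm}$ --- is cleaner and more self-contained than the paper's appeal to the formula from \cite{QuadSquares}. What your route buys is independence from the companion paper; what it costs is that the eight nontrivial table entries are still only promised by an unexecuted computer search, whereas the paper at least points to a source where equivalent numbers were already computed. To call your proof complete you would need to actually run (and make verifiable) the enumeration of quad-code-yielding subspaces of $\mathbb{F}_2^8$ and $\mathbb{F}_2^7$; everything else --- the dimension ranges, the deck sizes, and the strongly magic count --- is fully established by your argument.
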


\begin{table}[ht!]
\begin{center}
\begin{tabular}{|c|c|c|c|c|c|}
\hline
Dimension 		& 7	& 8           & 9            & 10           & 11             \\ \hline
Semimagic		& 1	& 159         & 2531         & 2823         & 112            \\ \hline
Magic 			& 	& 1           & 43           & 85           & 10              \\ \hline
Strongly magic		&  	&             &              &              & 1         \\ \hline
Deck size		& 256	& 128         & 64           & 32           & 16        \\ \hline
\end{tabular}
\end{center}
\caption{The number of codes in each dimension.}
\label{table:dimensions}
\end{table}

\begin{proof}
From Theorem~\ref{thm:fromCodes2Cards}, we know that each quad code is realizable in some deck. Moreover, from Theorem~\ref{thm:equivalence}, we know that the cards corresponding to a code are defined up to an affine transformation.

In \cite{QuadSquares}, it was shown that in the EvenQuads-$2^n$ deck, there are
\begin{align*}
2^n(2^n-1)&(2^n-2)(2^n-4)(2^n-8)\cdot \\
& (112 + 2823  (2^n - 16) + 2531 (2^n - 16) (2^n - 32) + 159 (2^n - 16) (2^n - 32) (2^n-64) + \\
  &(2^n - 16) (2^n - 32)(2^n-64)(2^n-128))
\end{align*}
semimagic quad squares. 
The proof showed that this formula is a sum of the number of equivalency classes of sets of cards forming a magic square and spanning a particular deck size times the number of elements in such a class. From here, the coefficients in the sum above correspond to the number of quad codes defining semimagic squares in different dimensions. 

Similar, in \cite{QuadSquares}, we showed that in the EvenQuads-$2^n$ deck, there are
\begin{align*}
2^n(2^n-1)&(2^n-2)(2^n-4)(2^n-8)\cdot \\
& (10 + 85(2^n - 16) + 43(2^n - 16) (2^n - 32) + (2^n - 16) (2^n - 32) (2^n-64))
\end{align*}
magic quad squares. 
The third row of the table is extracted from the formula in the same way as above.

For the fourth row, we use the theorem from \cite{QuadSquares} that shows that there are $2^n(2^n-1)(2^n-2)(2^n-4)(2^n-8)$ strongly magic quad squares that can be made by using the cards from the EvenQuads-$2^n$ deck.

From Theorem~\ref{thm:fromCodes2Cards}, we can find the smallest $n$, such that the deck size realizing a quad code of dimension $k$ corresponding to one of the squares is $2^n$. Given that the length of our codes is 16, such $n$ has to be $16 - k - 1 = 15 - k$.
\end{proof}

\section{Acknowledgments}

We are grateful to the MIT PRIMES STEP program and its director, Slava Gerovitch, for allowing us the opportunity to do this research.

\end{document}